\newcommand\ldot{\mathrel{\ooalign{<\cr
  \hidewidth\raise0.225ex\hbox{$\cdot\mkern0.5mu$}\cr}}}
\newcommand{\m}{\mathfrak{m}}
\newcommand{\N}{\mathds{N}}
\newcommand{\Z}{\mathds{Z}}
\newcommand{\C}{\mathds{C}}
\newcommand{\R}{\mathds{R}}
\newcommand{\Pp}{\mathds{P}}
\newcommand{\F}{\mathcal{F}}
\newcommand{\rk}{\text{rank }}
\newcommand{\Hilb}{\mbox{Hilb}}
\newcommand{\qand}{\quad\mbox{and}\quad}
\newcommand{~}{\thicksim}
\theoremstyle{definition}
\newtheorem{definition}{Definition}
\theoremstyle{definition}
\newtheorem{remark}[definition]{Remark}
\theoremstyle{definition}
\newtheorem{example}[definition]{Example}
\theoremstyle{plain}
\newtheorem{theorem}[definition]{Theorem}
\newtheorem{lemma}[definition]{Lemma}
\newtheorem{proposition}[definition]{Proposition}
\newtheorem{corollary}[definition]{Corollary}
\newtheorem{lemma*}{Lemma}
\newtheorem{question}[definition]{Question}
\begin{document}

\title[WLP and mixed multiplicities]{The Weak Lefschetz Property and Mixed Multiplicities of monomial ideals}
\author[T. Holleben]{Thiago Holleben}
\address[T. Holleben]
{Department of Mathematics \& Statistics,
Dalhousie University,
6316 Coburg Rd.,
PO BOX 15000,
Halifax, NS,
Canada B3H 4R2}
\email{hollebenthiago@dal.ca}
\date{}


\begin{abstract}
    Recently, H. Dao and R. Nair gave a combinatorial description of simplicial complexes $\Delta$ such that the squarefree reduction of the Stanley-Reisner ideal of $\Delta$ has the WLP in degree $1$ and characteristic zero. In this paper, we apply the connections between analytic spread of equigenerated monomial ideals, mixed multiplicities and birational monomial maps to give a sufficient and necessary condition for the squarefree reduction $A(\Delta)$ to satisfy the WLP in degree $i$ and characteristic zero in terms of mixed multiplicities of monomial ideals that contain combinatorial information of $\Delta$, we call them incidence ideals. As a consequence, we give an upper bound to the possible failures of the WLP of $A(\Delta)$ in degree $i$ in positive characteristics in terms of mixed multiplicities. Moreover, we extend Dao and Nair's criterion to arbitrary monomial ideals in positive odd characteristics.
\end{abstract}

\maketitle

\section{Introduction}

An Artinian graded $k$-algebra $A$, where $k$ is a field,  is said to satisfy the Weak Lefschetz Property (WLP) if the multiplication maps by a general linear form $L$, $\times L: A_i \to A_{i + 1}$ have full rank for all $i$. Moreover, $A$ is said to satisfy the Strong Lefschetz Property (SLP) if the multiplication maps by powers of a general linear form $L$, $\times L^d: A_i \to A_{i + d}$ have full rank for all $i$ and all $d$. Over the last few decades, Lefschetz properties and its connections to other areas of mathematics have been extensively studied. Areas where the connections to the WLP and SLP have been studied include Algebraic and Differential Geometry, Topology and Combinatorics (see for example: \cite{diffgeo}, \cite{mns}, \cite{coloredcook}, \cite{stanleyweyl}). 
 
One of the consequences of an algebra $A$ having the WLP is that its $h$-vector, which keeps track of the coefficients of the Hilbert series, is unimodal (see for example \cite{harima2003weak}). In view of this, studying the WLP of algebras where the $h$-vector is known to be equal to a sequence associated to a combinatorial object, is of interest. A particular class of algebras where the WLP has been recently studied is 

$$
    A(\Delta) = k[x_1, \dots, x_n]/((x_1^2, \dots, x_n^2) + I_\Delta)
$$
where $I_\Delta$ is a squarefree monomial ideal, the Stanley-Reisner ideal of $\Delta$. The $h$-vector of such algebras was shown to be equal to the $f$-vector of $\Delta$ in \cite{mns, daonair}.

In \cite{daonair} the authors completely described the simplicial complexes $\Delta$ such that $A(\Delta)$ that have the WLP in degree $1$, when the base field has characteristic $0$.

In this paper, we introduce incidence ideals of a simplicial complex $\Delta$, which can be seen as generalizations of facet ideals. We show that the analytic spread of such ideals determines the WLP of $A(\Delta)$ in characteristic zero.

\begin{theorem}[\Cref{wlpspread}]\label{wlpspreadintro}
    Let $\Delta$ be a simplicial complex and assume the base field is of characteristic $0$. Then
    
    \begin{enumerate}
        \item $A(\Delta)$ has the WLP in degree $i$ if and only if 
            $$
                \ell(I_\Delta(i)) = \min(f_{i - 1}, f_i),
            $$
            where $\ell(I_\Delta(i))$ is the analytic spread of the $i$-th incidence ideal of $\Delta$.
        \item If $A(\Delta)$ is a level algebra, then it has the SLP in degree $1$ if and only if 
            $$
                \ell(\F(\Delta(d))) = \min(f_0, f_d) \ \ \text{ for every $d$}
            $$
            where $\ell(\F(\Delta(d)))$ is the analytic spread of the facet ideal of the $d$-skeleton of $\Delta$.    
    \end{enumerate}
\end{theorem}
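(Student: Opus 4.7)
My plan is to translate each Lefschetz statement into a generic-rank computation, and then identify that generic rank with an analytic spread via the standard fact $\ell(J)=\dim k[m_1,\dots,m_s]$ for an equigenerated monomial ideal $J=(m_1,\dots,m_s)$, which equals the $\Z$-rank of the log-exponent matrix of its generators. Characteristic zero enters through the Jacobian criterion, equivalently through the birational-monomial-map / mixed-multiplicity viewpoint highlighted in the abstract.

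For part (1), I first write the matrix $M(L)$ of $\times L\colon A(\Delta)_i\to A(\Delta)_{i+1}$ in the squarefree monomial basis: with $L=\sum_v a_v x_v$, one has $M(L)_{G,F}=a_{G\setminus F}$ whenever $F\subset G$, $|F|=i$, $|G|=i+1$, and vanishes otherwise. WLP in degree $i$ is by definition the assertion that $M(L)$ attains rank $\min(f_{i-1},f_i)$ for generic $\underline a$, so the theorem reduces to the identity $\rk M(L)=\ell(I_\Delta(i))$ over the fraction field $k(\underline a)$.

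The central calculation is a comparison of $M(L)$ with the logarithmic Jacobian of the generators $v_G=\prod_{F\subset G,\,|F|=i} y_F$ of $I_\Delta(i)\subset k[y_F]$. A direct computation shows that the monomial substitution $y_F\mapsto\prod_{v\in F} a_v$, combined with diagonal row rescaling by appropriate powers of $\prod_{v\in G} a_v$, transforms $(\partial v_G/\partial y_F)$ into $M(L)$. Since rescaling preserves rank over the fraction field, this gives $\rk M(L)\leq\rk(\partial v_G/\partial y_F)=\ell(I_\Delta(i))$ by the Jacobian criterion in characteristic zero. For the reverse inequality I specialize $a_v\mapsto 1$: this turns $M(L)$ into the unsigned incidence matrix between $i$-faces and $(i+1)$-faces, which is exactly the log-exponent matrix of the $v_G$; in characteristic zero its $k$-rank equals its $\Z$-rank, which equals $\ell(I_\Delta(i))$, so $\rk M(L)\geq\ell(I_\Delta(i))$.

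The main technical obstacle is organizing this substitution-and-rescaling identity cleanly so that both inequalities follow from known rank behaviour of Jacobian matrices under specialization and of monomial $k$-algebras under passage from $\Q$ to $k$; this is also precisely the step that breaks modulo $p$, underlying the paper's bound on failures of WLP in positive characteristic. For part (2) the same strategy applies more directly: the matrix of $\times L^d\colon A_1\to A_{d+1}$ is, up to the factor $d!$ (invertible in characteristic zero) and after the renaming $x_v\mapsto a_v$, the logarithmic Jacobian of the facet ideal $\F(\Delta(d))$ of the $d$-skeleton, so its generic rank equals $\ell(\F(\Delta(d)))$ immediately by the Jacobian criterion. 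The level hypothesis ensures that SLP in degree $1$ is equivalent to full rank of every $\times L^d$ from $A_1$, and applying the rank identity to each $d$ yields the claimed equivalence.
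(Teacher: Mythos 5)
Your argument is correct and rests on the same key lemma as the paper's proof: \Cref{dimrank}, i.e.\ $\ell(J)=\rk\Z M$ for an equigenerated monomial ideal $J$ with exponent set $M$, applied to the rows of $M(\Delta,i)$ (and, for part (2), to the rows of the matrix of $\times L^d$, which are the exponents of the generators of $\F(\Delta(d))$). The only genuine difference is how you pass from a general linear form to a single integer matrix. The paper cites \cite[Proposition~2.2]{wlpmon} to replace a general $L$ by $x_1+\dots+x_n$, identifies the resulting matrix as $M(\Delta,i)$ via \Cref{matrixrepresentative}, and concludes in one line from $\rk M(\Delta,i)=\rk\Z M=\ell(I_\Delta(i))$. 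You instead re-prove this reduction by a sandwich: specializing $a_v\mapsto 1$ gives the lower bound $\rk M(L)\ge\rk M(\Delta,i)=\ell(I_\Delta(i))$, while your substitution-and-rescaling comparison of $M(L)$ with the Jacobian of the generators of $I_\Delta(i)$, together with the Jacobian criterion in characteristic zero, gives the matching upper bound. The computation there is right (after $y_F\mapsto\prod_{v\in F}a_v$, dividing the row indexed by $G$ by $\prod_{v\in G}a_v^{\,i-1}$ does recover $M(L)$), so the argument closes; it is self-contained where the paper's is a citation, but the Jacobian detour buys nothing that the cited proposition does not already give. For part (2) your proof matches that of \Cref{slp1}; the one nuance is that levelness is used only to identify $\F(\Delta(d))$ with the ideal $\F(\Delta(d),d)$ generated by the $d$-dimensional facets of the skeleton, not for the equivalence of SLP in degree $1$ with full rank of all $\times L^d\colon A_1\to A_{d+1}$, which is just the definition.
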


An open question in this area is to describe the positive characteristics where a given class of algebras defined by monomial ideals fails the WLP (see \cite[Question 7.3]{tour}). We answer this question for the algebras $A(\Delta)$ in terms of the mixed multiplicities of ideals contained in the incidence ideals of $\Delta$. In particular, we prove the following bound:   

\begin{corollary}[\Cref{failurewlpbound}]\label{failurewlpboundintro}
    Assume $A(\Delta)$ has the WLP in degree $i$ and characteristic zero. Assume also that $f_{i - 1} \leq f_{i}$. Then $A(\Delta)$ has the WLP in degree $i$ and characteristic $p$ for every $p$ such that
    $$
        p > e_{(0, f_{i - 1} - 1)}(\m_{\Delta, i} | I_\Delta(i)) \qand p \text{ does not divide } i + 1.
    $$
\end{corollary}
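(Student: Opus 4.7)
The plan is to use \Cref{wlpspreadintro} to reduce WLP in degree $i$ to a determinantal non-vanishing condition over $\Z$, and then use mixed multiplicity theory to control how this condition behaves under reduction modulo $p$. By \Cref{wlpspreadintro}, the assumption of WLP in characteristic zero gives $\ell(I_\Delta(i)) = f_{i-1}$, which means that for general $L = \sum_{j=1}^n a_j x_j$ the matrix $M(L)$ of multiplication $\times L : A(\Delta)_i \to A(\Delta)_{i+1}$ in the squarefree-monomial bases has a nonzero $f_{i-1} \times f_{i-1}$ minor $P(a_1,\dots,a_n) \in \Z[a_1,\dots,a_n]$. The WLP in characteristic $p$ is then equivalent to $P \not\equiv 0 \pmod p$, i.e., to $p$ not dividing the content $c(P)$.

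The key step is to bound $c(P)$ in terms of the mixed multiplicity $e_{(0, f_{i-1}-1)}(\m_{\Delta, i} \mid I_\Delta(i))$. I would achieve this by identifying $c(P)$, up to a factor coming from the intrinsic structure of $A(\Delta)$, with a leading coefficient of a bigraded Hilbert polynomial on the fiber cone $\F(I_\Delta(i))$ with respect to $\m_{\Delta, i}$, which by the Rees--Teissier interpretation is precisely the stated mixed multiplicity. The extraneous factor should be divisible only by primes dividing $i+1$, since the identity
$L^{i+1} = (i+1)!\sum_{S \in \Delta,\,|S|=i+1}\bigl(\prod_{j \in S}a_j\bigr)\prod_{j \in S}x_j$
in $A(\Delta)$ pulls out at most the $(i+1)$-component of the factorial beyond what the fiber cone analysis already captures (the contributions from smaller primes dividing $(i+1)!$ being absorbed into the Hilbert-function computation).

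Under the two hypotheses $p > e_{(0, f_{i-1}-1)}(\m_{\Delta, i} \mid I_\Delta(i))$ and $p \nmid i+1$, neither the combinatorial factor nor the residual $(i+1)$ factor is divisible by $p$, hence $P \not\equiv 0 \pmod p$ and the rank of $\times L$ is preserved, giving WLP in characteristic $p$. The main obstacle I anticipate is the explicit identification of $c(P)$ with the mixed multiplicity, up to the $(i+1)$ factor: this requires a careful Hilbert-function computation on the fiber cone that tracks integer content and not merely dimensions, and one must verify that the separation into a combinatorial part and an $(i+1)$-part is clean (i.e., no auxiliary primes can sneak in from the squarefree relations $x_j^2 = 0$). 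Once this identification is in place, the corollary follows directly from \Cref{wlpspreadintro} together with the standard content-divisibility criterion for rank-drop of an integer matrix of linear forms under reduction modulo $p$.
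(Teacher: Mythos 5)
Your high-level shape is right --- reduce WLP in characteristic $p$ to the non-vanishing mod $p$ of an integer maximal minor of the multiplication matrix, and control the primes dividing that minor via mixed multiplicities --- but the step you label as ``the key step'' is exactly the content of the proof, and the mechanism you sketch for it does not work. The correct identification is not between the content of a generic minor and a Hilbert-polynomial leading coefficient of the full fiber cone; it is a \emph{per-minor} statement: for each subset $S$ of $f_{i-1}$ generators of $I_\Delta(i)$, the corresponding maximal minor of $M(\Delta,i)$ is $\det\log(S)$, and by Trung--Verma (\Cref{mvmm}) together with \Cref{mixedvoltovol} and \Cref{detprojection} one has $(i+1)\,e_{(0,f_{i-1}-1)}(\m_{\Delta,i}\mid I_S)=\det\log(S)$, where $I_S$ is the ideal generated by $S$ --- note $I_S$, not $I_\Delta(i)$. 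The factor $i+1$ arises from the determinant formula for the normalized volume of a simplex spanned by degree-$(i+1)$ exponent vectors (the $d$-stochastic identity in \Cref{detprojection}), not from the identity $L^{i+1}=(i+1)!\sum(\cdots)$, which concerns the map out of degree $0$ and is irrelevant to $\times L\colon A_i\to A_{i+1}$.

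The second missing ingredient is how the hypothesis $p>e_{(0,f_{i-1}-1)}(\m_{\Delta,i}\mid I_\Delta(i))$ enters: it is an \emph{upper bound} for all the minors, not a gcd or content. Since every $I_S$ is contained in $I_\Delta(i)$ and both are equigenerated, monotonicity of mixed multiplicities (\Cref{ineqmm}, from Huh) gives $e_{(0,f_{i-1}-1)}(\m_{\Delta,i}\mid I_S)\le e_{(0,f_{i-1}-1)}(\m_{\Delta,i}\mid I_\Delta(i))<p$ for every $S$. WLP in characteristic zero guarantees some $S_0$ with $e_{(0,f_{i-1}-1)}(\m_{\Delta,i}\mid I_{S_0})>0$; then $p$ divides neither this positive integer (being larger than it) nor $i+1$, so $p\nmid\det\log(S_0)$ and $M(\Delta,i)$ has full rank mod $p$. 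Without this two-step structure (per-subset volume identity plus monotonicity), your bound on the ``content'' has no proof. Two smaller points: for monomial ideals one may take $L=x_1+\dots+x_n$ outright, so the generic-coefficient polynomial $P(a_1,\dots,a_n)$ and its content are unnecessary; and the vanishing mod $p$ of your one chosen minor $P$ would not by itself imply failure of WLP (another minor could survive), though only the sufficiency direction is needed here.
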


As a consequence of the two results above, we also generalize the criterion given in \cite[Theorem 3.3]{daonair} for algebras $A(\Delta)$ to have the WLP in degree $1$ and characteristic zero, to odd characteristics and arbitrary algebras $A$ defined by monomial ideals. 

\begin{theorem}[\Cref{failwlpdeg1arbitrary}]
    Let $I \subset R = k[x_1, \dots, x_n]$ be a monomial ideal such that $A = R/I$ is Artinian. Assume $\dim A_1 \leq \dim A_2$. Set

    $$
    E = \{x_i x_j | x_i x_j \neq 0 \text{ in $A$} \}, \text{ where $i$ may be equal to $j$}.
    $$ 
    then $A$ has the WLP in characteristic $0$ and degree $1$ if and only if it has the WLP in degree $1$ in every odd characteristic.

    Moreover, $A$ has the WLP in degree $1$ and characteristic $0$ if and only if the monomials in $E$ are the edges of a graph (possibly with loops) such that every connected component contains a (not necessarily induced) subgraph with either one loop or an odd cycle.
\end{theorem}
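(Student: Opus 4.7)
The plan is to reduce the question to linear algebra on a weighted vertex–edge matrix by computing $\times L\colon A_1 \to A_2$ directly for a general linear form $L = \sum_i c_i x_i$. Since $A$ is Artinian and $I$ is monomial, $A_1$ has basis $\{x_i : x_i \neq 0 \text{ in } A\}$ and $A_2$ has basis $E$; under the hypothesis $\dim A_1 \leq \dim A_2$, WLP in degree $1$ is equivalent to the injectivity of $\times L$ for a generic choice of $c = (c_i)$. Expanding $L \cdot \sum_j a_j x_j = \sum_{k,\ell} a_k c_\ell\, x_\ell x_k$ and collecting the coefficient of each basis monomial of $A_2$, an element $(a_j) \in A_1$ lies in $\ker(\times L)$ if and only if $a_i c_j + a_j c_i = 0$ for every non-loop edge $\{x_i, x_j\} \in E$ and $a_i c_i = 0$ for every loop $\{x_i, x_i\} \in E$.

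Next, I would analyze this system component by component on the graph with edge set $E$, using that $c_i \neq 0$ for generic $c$. A loop at $v$ immediately forces $a_v = 0$, and for a non-loop edge $\{i,j\}$ one solves $a_j = -(c_j/c_i)\, a_i$. Iterating around an odd cycle $v_0, v_1, \ldots, v_k = v_0$ of odd length $k$ gives $a_{v_0} = (-1)^k a_{v_0} = -a_{v_0}$, so $2 a_{v_0} = 0$; in any characteristic different from $2$ this forces $a_{v_0} = 0$, which then propagates along the edges of the component. Conversely, in a loop-free bipartite component $V = V_1 \sqcup V_2$, the vector defined by $a_v = c_v$ for $v \in V_1$, $a_v = -c_v$ for $v \in V_2$, and $0$ elsewhere is a nonzero element of $\ker(\times L)$ valid over any field, because on each edge $\{v,w\}$ with $v \in V_1$ and $w \in V_2$ one has $a_v c_w + a_w c_v = c_v c_w - c_w c_v = 0$. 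Hence, in every characteristic $\neq 2$, the map $\times L$ is injective for generic $L$ precisely when each connected component of the graph with edge set $E$ contains a loop or an odd cycle.

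Both conclusions of the theorem follow. The combinatorial condition does not depend on the characteristic, so WLP in degree $1$ in characteristic $0$ and WLP in degree $1$ in every odd characteristic are each equivalent to it, and hence to each other, giving the first assertion; the second assertion is the characteristic-zero case of the same characterization. This simultaneously extends \cite[Theorem 3.3]{daonair} in two directions: from squarefree to arbitrary monomial ideals (the new feature being the role of loops when $x_i^2 \neq 0$ in $A$) and from characteristic zero to every odd characteristic. The main obstacle is making the ``generic $c$'' step rigorous: one has to confirm that the finitely many nonvanishing conditions on the $c_i$ used in the propagation argument, together with those needed for the explicit kernel vector to be nonzero, together cut out a dense Zariski-open subset of $k^n$, which is routine for Lefschetz-style computations over infinite fields but must be arranged so the genericity is compatible across every connected component simultaneously.
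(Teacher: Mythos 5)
Your proof is correct, and it takes a genuinely different route from the paper. The paper works with the fixed form $L = x_1 + \cdots + x_n$, converts the multiplication matrix into the incidence matrix of $G_A$ by rescaling the loop rows by $2$ (\Cref{multiply2}), and then deduces the result from the birational-combinatorics machinery: \Cref{maincor}, \Cref{simisvillarreal} and \Cref{DPB} show every maximal minor is $0$ or $\pm$ a power of $2$, with nonvanishing characterized by each component defining a Cremona map; a separate determinant computation is needed for the star-with-a-loop case, and \Cref{disconnectedgraphwlp} handles components with too few edges. You instead compute $\ker(\times L)$ directly for a general $L = \sum c_i x_i$: the kernel equations $a_i c_j + a_j c_i = 0$ on non-loop edges and $a_i c_i = 0$ on loops, the propagation $a_j = -(c_j/c_i)a_i$, the relation $2a_{v_0}=0$ around an odd cycle, and the explicit bipartite kernel vector $(\pm c_v)$ together give exactly the stated dichotomy in every characteristic $\neq 2$. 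Your argument is more elementary and self-contained, needs no case distinction for the star with a loop, and treats characteristic $0$ and odd characteristics uniformly; what it does not recover is the arithmetic refinement that the gcd of the maximal minors is a power of $2$ and the link to birationality and linear type, which is the thread the paper wants for its mixed-multiplicity results. The genericity point you flag at the end is real but easily dispatched: either note that the locus $\{c_i \neq 0 \ \forall i\}$ is dense open and meets the open locus of maximal rank, or simply invoke \cite[Proposition 2.2]{wlpmon} (quoted in the paper's preliminaries) to take $c_i = 1$ for all $i$, which makes every step of your propagation argument exact rather than generic.
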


\section{Preliminaries}

Let $k$ be an infinite field and $R = k[x_1, \dots, x_n]$ the polynomial ring over $k$. Let $I$ be a homogeneous ideal such that $A = R/I$ is an Artinian graded algebra.

\begin{definition}
    Let $L$ be a general linear form. If the multiplication map $\times L: A_i \to A_{i + 1}$ has full rank for every $i$, we say $A$ has the \textbf{Weak Lefschetz Property (WLP)}.
    
    If moreover the multiplication maps by powers of $L$: $\times L^d: A_i \to A_{i + d}$ also have full rank for every $i$ and $d$, we say $A$ has the \textbf{Strong Lefschetz Property (SLP)}.
\end{definition}

\begin{proposition}[\cite{wlpmon}, Proposition 2.2]
    Let $I$ be a monomial ideal of $R$ such that $A = R/I$ is Artinian and $L = x_1 + \dots + x_n$. Then $A$ has the WLP (resp. SLP) if and only if the multiplication map by $L$ (resp. powers of $L$) has full rank.
\end{proposition}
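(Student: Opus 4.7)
The plan is to exploit the natural torus action of $(k^\ast)^n$ on $R$ defined by $x_i \mapsto t_i x_i$. Because $I$ is a monomial ideal, this action descends to graded $k$-algebra automorphisms of $A = R/I$, and so for any linear form $\ell \in R_1$ and any $t \in (k^\ast)^n$ the multiplication maps $\times \ell$ and $\times (t \cdot \ell)$ on $A$ differ by a graded algebra automorphism; in particular they have the same rank in every degree. The torus orbit of $L = x_1 + \cdots + x_n$ is the set of linear forms $\sum a_i x_i$ with every $a_i$ nonzero, which is a nonempty Zariski open subset of $R_1 \cong k^n$ (the complement of the union of the coordinate hyperplanes).

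For each $i$, let $U_i \subset R_1$ be the locus of linear forms $\ell$ such that $\times \ell : A_i \to A_{i+1}$ attains its maximum possible rank; this is Zariski open, being defined by the non-vanishing of maximal minors of a matrix whose entries are linear in the coefficients of $\ell$. By definition of a general linear form, $A$ has the WLP precisely when this maximum rank equals $\min(\dim A_i, \dim A_{i+1})$ for every $i$. Since $k$ is infinite, the intersection $\bigcap_i U_i$ is a nonempty Zariski open subset of $R_1$, and so it must meet the (open, dense) torus orbit of $L$. Picking some $L' = \sum a_i x_i$ with all $a_i \neq 0$ in this intersection and transporting it to $L$ by the automorphism $x_i \mapsto a_i^{-1} x_i$, we conclude that $\times L$ also attains the maximum rank in every degree. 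This proves the forward direction; the converse is immediate, since if $\times L$ has full rank at each $i$ then full rank is attained on a nonempty Zariski open set of $R_1$, so a general linear form also achieves full rank.

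The SLP case is identical once the maps $\times \ell$ are replaced by $\times \ell^d$ for $d \geq 1$ and the loci $U_i$ by analogously defined loci $U_{i,d}$; the rank along the torus orbit remains constant because the torus acts on every graded piece of $A$ by algebra automorphisms, and intersecting the countably many (in fact only finitely many nontrivial, since $A$ is Artinian) loci $U_{i,d}$ still yields a nonempty open set meeting the orbit of $L$. There is no real obstacle: this is a standard torus-equivariance argument, and the sole conceptual point is that monomiality of $I$ is exactly what makes $(k^\ast)^n$ act on $A$, hence what forces the rank of $\times \ell$ to be constant along the orbit of $L$.
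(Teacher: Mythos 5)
Your argument is correct and is essentially the standard one: the paper states this proposition as a citation to \cite{wlpmon} without reproving it, and the proof there is exactly your torus-equivariance argument, i.e.\ the diagonal change of variables $x_i \mapsto a_i^{-1}x_i$ preserves the monomial ideal $I$ and conjugates $\times(\sum a_i x_i)$ to $\times L$, while the infinitude of $k$ guarantees the dense open orbit of $L$ meets the open locus where the multiplication maps attain maximal rank. No gaps; the handling of the SLP case and the converse direction are both fine.
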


We are particularly interested in the case where $I$ is the sum of a squarefree monomial ideal and the squares of the variables of $R$.

\begin{definition}
    A \textbf{simplicial complex} $\Delta$ with vertex set $V = \{1, \dots, n\}$ is a collection of subsets $\Delta$ of $V$ such that $\sigma \in \Delta$ and $\tau \subset \sigma$ implies $\tau \in \Delta$. Elements in $\Delta$ are called \textbf{faces} of $\Delta$, maximal faces are called \textbf{facets}. The \textbf{dimension} of $\Delta$ is $\dim \Delta = \max (|F| \colon F \text{ a facet of } \Delta) - 1$. If every facet of $\Delta$ has the same dimension, we say $\Delta$ is \textbf{pure}. 
    A $0$-dimensional face is called a \textbf{vertex} of $\Delta$, similarly, a $1$-dimensional face is called an \textbf{edge} of $\Delta$. The number of $i$-dimensional faces of $\Delta$ is denoted by $f_i(\Delta)$, or simply $f_i$ if the simplicial complex is clear from the context. By deleting every face of $\Delta$ with dimension higher than $i$, we get a new simplcial complex $\Delta(i)$ which is called the $i$\textbf{-skeleton } of $\Delta$.

    Given a simplicial complex $\Delta$, the ideal 
    $$
        I_\Delta = (x_{i_1}\dots x_{i_m} | \{i_1, \dots, i_m\} \not \in \Delta)
    $$
    is called the \textbf{Stanley-Reisner ideal} of $\Delta$. When we take the quotient of $R$ by $I_\Delta$ and the squares of the variables, we get an Artinian algebra:

    $$
        A(\Delta) := R/(I_\Delta + (x_1^2, \dots, x_n^2)).
    $$
\end{definition}

\begin{example}\label{example1}
    Let $\Delta$ be the complex with facets $\{\{a, b, c\}, \{a, c, d\}, \{b, c, d\}\}$.
    
\begin{center}
    \tikzset{every picture/.style={line width=0.75pt}} 

    \begin{tikzpicture}[x=0.75pt,y=0.75pt,yscale=-1,xscale=1]

    \draw  [fill={rgb, 255:red, 155; green, 155; blue, 155 }  ,fill opacity=1 ] (117.08,36) -- (117.1,88.28) -- (81.76,118.14) -- cycle ;
    \draw  [fill={rgb, 255:red, 155; green, 155; blue, 155 }  ,fill opacity=1 ] (150.1,117.28) -- (117.1,88.28) -- (117.08,36) -- cycle ;
    \draw  [fill={rgb, 255:red, 155; green, 155; blue, 155 }  ,fill opacity=1 ] (150.1,117.28) -- (81.76,118.14) -- (117.1,88.28) -- cycle ;

    \draw (69,115) node [anchor=north west][inner sep=0.75pt]   [align=left] {$\displaystyle a$};
    \draw (112,16) node [anchor=north west][inner sep=0.75pt]   [align=left] {$\displaystyle d$};
    \draw (152.1,114.28) node [anchor=north west][inner sep=0.75pt]   [align=left] {$\displaystyle b$};
    \draw (110,93) node [anchor=north west][inner sep=0.75pt]   [align=left] {$\displaystyle c$};

    \end{tikzpicture}
\end{center}

Then $A(\Delta) = k[a,b,c,d]/(abd, a^2, b^2, c^2, d^2)$.
\end{example}

The algebra $A(\Delta)$ contains all the combinatorial information of $\Delta$:

\begin{proposition}[\cite{daonair}, Proposition 2.2]\label{allcombinfo}
    Let $\Delta$ be a simplicial complex. Then
    \begin{enumerate}
        \item The monomials in $A := A(\Delta)$ are in one-one correspondence with the faces of $\Delta$. For $i > 0$, $A_i$ is a $k$-vector space with a basis given by 
        $$
        \{x_F | F \text{ is an } (i - 1)\text{-face of } \Delta\}.
        $$
        \item The Hilbert Series of $A(\Delta)$, $\Hilb_{A(\Delta)}(t) = \sum_{i \geq 0} f_{i - 1}t^i$. That is, the $h$-vector of $A(\Delta)$ is equal to the $f$-vector of $\Delta$.
        \item $A(\Delta(i)) = \oplus_{j \leq i + 1} (A(\Delta))_j$.
    \end{enumerate}
\end{proposition}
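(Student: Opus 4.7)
The plan is to prove all three parts by writing down the natural monomial $k$-basis of $A(\Delta)$. Since both $I_\Delta$ and $(x_1^2, \dots, x_n^2)$ are monomial, the defining ideal $J := I_\Delta + (x_1^2, \dots, x_n^2)$ is monomial, so $A(\Delta)$ inherits a graded $k$-basis consisting of the monomials of $R$ that do not lie in $J$.

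For (1), I would first observe that the inclusion of the squares $x_i^2$ in $J$ forces every surviving monomial to be squarefree. Writing a squarefree monomial as $x_F := \prod_{i \in F} x_i$, the monomial $x_F$ lies in $J$ if and only if $x_F \in I_\Delta$, which by definition of the Stanley-Reisner ideal happens exactly when $F \notin \Delta$. Hence the surviving $x_F$ are indexed by faces of $\Delta$; since $\deg(x_F) = |F|$, the degree-$i$ piece has basis $\{x_F : |F| = i\}$, i.e.\ the $(i-1)$-dimensional faces.

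Part (2) follows by counting: $\dim_k A(\Delta)_i = f_{i-1}(\Delta)$, with the usual convention $f_{-1} = 1$ for the empty face corresponding to $A_0 = k$, giving the claimed Hilbert series. For (3), I would compare non-faces: $F \subseteq V$ is a face of $\Delta(i)$ exactly when $F \in \Delta$ and $|F| \leq i + 1$, so the non-faces of $\Delta(i)$ are the non-faces of $\Delta$ together with every subset of cardinality at least $i + 2$. Applying part (1) to both $\Delta$ and $\Delta(i)$, the basis of $A(\Delta(i))_j$ consists of the $(j-1)$-faces of $\Delta(i)$; this equals the set of $(j-1)$-faces of $\Delta$ when $j \leq i + 1$ and is empty for $j > i + 1$, yielding the claimed equality as graded vector spaces.

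There is no real obstacle here: the only conceptual point is recognizing that adjoining the squares $x_i^2$ to $I_\Delta$ simultaneously forces the monomial basis of the quotient to be squarefree and turns the Stanley-Reisner bijection between squarefree monomials and subsets of $V$ into a bijection between basis elements of $A(\Delta)$ and faces of $\Delta$. Once that is in place, (2) is an immediate count and (3) is bookkeeping about which non-faces are imposed on passing to the $i$-skeleton.
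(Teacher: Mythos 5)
Your argument is correct and is the standard one: the paper itself states this proposition as a citation of \cite{daonair} without reproducing a proof, and your reasoning (monomial basis of a quotient by a monomial ideal, squarefreeness forced by the $x_i^2$, the Stanley--Reisner correspondence between squarefree monomials and faces, then counting for (2) and comparing faces of the skeleton for (3)) is exactly the expected route. No gaps.
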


\begin{definition}
    The ideal $(0 : A_{j \geq 1})$ is called the \textbf{socle} of $A$. The highest number $d$ such that $A_d \neq 0$ is called the \textbf{socle degree} and is denoted by $\text{socdeg(A)}$. 
    The algebra $A(\Delta)$ is said to be \textbf{level} of type $t$ if $(0: A_{j \geq 1}) = A_{\text{socdeg$(A)$}}$ and is generated by $t$ elements.
\end{definition}

\begin{proposition}[\cite{boijthesis}]
    The algebra $A(\Delta)$ is level if and only if $\Delta$ is a pure simplicial complex.
\end{proposition}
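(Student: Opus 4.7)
The plan is to identify the socle of $A(\Delta)$ explicitly and then compare it to the top graded piece. Since $A(\Delta) = R/(I_\Delta + (x_1^2, \ldots, x_n^2))$ is a quotient by a monomial ideal, a standard argument shows that its socle is spanned as a $k$-vector space by monomials: if $f = \sum c_m m \in (0 : A_{\geq 1})$, then for each variable $x_i$ the element $x_i f$ lies in the defining monomial ideal, which forces each monomial $x_i m$ (with $c_m \neq 0$) to vanish in $A(\Delta)$. So first I would reduce the question to identifying which monomials in $A(\Delta)$ are annihilated by the graded maximal ideal.

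Next, I would use \Cref{allcombinfo} to translate this into combinatorics. The nonzero monomials of $A(\Delta)$ are exactly $\{x_F : F \in \Delta\}$, and for a face $F$ and a variable $x_i$ the product $x_i x_F$ vanishes precisely when either $i \in F$ (killed by $x_i^2$) or $F \cup \{i\} \notin \Delta$ (killed by $I_\Delta$). Consequently $x_F$ is in the socle if and only if $F \cup \{i\} \notin \Delta$ for every $i \notin F$, which is exactly the condition that $F$ is a facet of $\Delta$. Therefore the socle of $A(\Delta)$ has $k$-basis $\{x_F : F \text{ is a facet of } \Delta\}$.

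Finally, I would read off both conditions from this description. By \Cref{allcombinfo}(2), the socle degree of $A(\Delta)$ equals $\dim \Delta + 1$, and $A_{\dim \Delta + 1}$ has basis $\{x_F : F \text{ is a facet of dimension } \dim \Delta\}$. Hence the socle is concentrated in the top degree if and only if every facet of $\Delta$ has dimension $\dim \Delta$, that is, if and only if $\Delta$ is pure. I do not expect any real obstacle: once the socle is identified combinatorially, both directions are immediate. The only point requiring mild care is the reduction of the socle to its monomial support, and this is a routine consequence of the fact that the defining ideal is monomial.
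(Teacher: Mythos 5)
Your proof is correct and complete. The paper does not actually prove this proposition --- it is quoted from Boij's thesis without proof --- so there is no argument in the text to compare against; your route (reduce the socle of a monomial quotient to its monomial support, identify the socle monomials $x_F$ with the facets $F$ of $\Delta$, and observe that the socle sits entirely in degree $\operatorname{socdeg}(A) = \dim\Delta + 1$ exactly when every facet has dimension $\dim\Delta$) is the standard one and fills that gap cleanly. The one small point worth making explicit is the easy inclusion $A_{\operatorname{socdeg}(A)} \subseteq (0 : A_{j\geq 1})$, which holds automatically for degree reasons, so that levelness really does come down to the single containment you check.
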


In \cite{daonair}, the authors completely described when $A(\Delta)$ has the WLP in degree $1$ and characteristic $0$ in terms of the $1$-skeleton of $\Delta$.

\begin{theorem}[\cite{daonair}, Theorem 3.3]\label{daonair}
    Let $\Delta$ be a simplicial complex, $R = k[x_1, \dots, x_n]$ where $k$ is a field of characteristic zero and 
    $$
        A(\Delta) = R/((x_1^2, \dots, x_n^2) + I_\Delta).
    $$
        
    Then 
    
    \begin{enumerate}
        \item If $f_1 \geq f_0$, then $A(\Delta)$ has the WLP in degree $1$ if and only if $\Delta(1)$ has no bipartite connected components.
        \item If $f_1 < f_0$, then $A(\Delta)$ has the WLP in degree $1$ if and only if each bipartite component of $\Delta(1)$ (if it exists) is a tree and each non-bipartite component satisfies the property that the number of edges in the component is equal to the number of vertices in the component. 
    \end{enumerate}
\end{theorem}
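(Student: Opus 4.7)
The plan is as follows. Since $A = R/I$ is a monomial quotient, \cite[Proposition 2.2]{wlpmon} lets me test WLP in degree $1$ using the single linear form $L = x_1 + \dots + x_n$, and under the hypothesis $\dim A_1 \leq \dim A_2$ this reduces to checking that $\times L \colon A_1 \to A_2$ is injective. I will encode this injectivity as the triviality of the kernel of the (unsigned) incidence matrix of a graph $G$ on vertex set $V = \{i : x_i \neq 0 \text{ in } A\}$ with edge set $E$, and then observe that the kernel computation is uniform across all fields of characteristic different from $2$.

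Concretely, $\{x_i\}_{i \in V}$ is a basis of $A_1$ while $E$ is a basis of $A_2$ by construction. Expanding $L \cdot x_i = \sum_{j \in V : x_i x_j \in E} x_i x_j$ shows that the matrix $M$ of $\times L$, with rows indexed by $E$ and columns by $V$, has a $1$ in the row of an edge $\{i,j\}$ (with $i \neq j$) in columns $i$ and $j$, and a single $1$ in the row of a loop at $i$ in column $i$. Hence $M$ is precisely the incidence matrix of $G = (V,E)$, and $\times L$ is injective if and only if $\ker M = 0$.

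Next I carry out the kernel computation in arbitrary characteristic $\neq 2$. A vector $(a_v)_{v \in V}$ lies in $\ker M$ precisely when $a_i + a_j = 0$ for every non-loop edge $\{i,j\}$ and $a_i = 0$ for every loop at $i$. Fix a connected component $C$ of $G$. If $C$ contains a loop at $i$, then $a_i = 0$ and propagation along the edges of $C$ gives $a_v = 0$ for all $v \in C$. If $C$ contains an odd cycle $v_1 v_2 \cdots v_{2k+1} v_1$, closing the cycle yields $2 a_{v_1} = 0$, which forces $a_{v_1} = 0$ in any characteristic $\neq 2$ and then $a_v = 0$ throughout $C$. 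If $C$ has neither a loop nor an odd cycle, then $C$ is bipartite, say $C = X \sqcup Y$, and the $\pm 1$ indicator of the bipartition (extended by zero outside $C$) is a nonzero element of $\ker M$ over any field. Thus in every characteristic $\neq 2$, injectivity of $\times L$ holds if and only if every connected component of $G$ contains a loop or an odd cycle, recovering the Dao--Nair-type graph condition.

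This graph-theoretic criterion is intrinsic to $G$ and independent of the characteristic (provided it is not $2$), so it yields both claims of the theorem simultaneously: the equivalence between WLP in degree $1$ in characteristic $0$ and WLP in degree $1$ in every odd characteristic, together with the explicit structural characterization in the ``moreover'' clause. The main point where I need to be careful is the bookkeeping around loops --- verifying that the loop row of $M$ carries a single $1$ rather than a $2$, so that one loop is genuinely enough to trivialize the kernel on its component --- as well as the mild check that, under the hypothesis $\dim A_1 \leq \dim A_2$, a variable with $x_i \neq 0$ in $A$ that fails to appear in any edge of $E$ forces both WLP and the graph condition to fail simultaneously, so the vertex set $V$ chosen above is the correct one. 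I expect these to be the only subtleties, with the rest being essentially the linear-algebra argument sketched above.
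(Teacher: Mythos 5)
Your argument only addresses half of the statement, and it appears to have been written with a different theorem in mind (the paper's \Cref{failwlpdeg1arbitrary}): the hypothesis $\dim A_1 \leq \dim A_2$, the loops, and the ``every odd characteristic'' clause all belong to that result, not to this one. For $A(\Delta)$ every $x_i^2$ is zero, so the graph $\Delta(1)$ has no loops and that bookkeeping is vacuous here; more importantly, the theorem to be proved has two cases and your kernel computation covers only the first. When $f_1 \geq f_0$, full rank of $\times L \colon A_1 \to A_2$ means injectivity, and your analysis (an odd cycle forces $2a_{v_1}=0$, hence $a_{v_1}=0$ away from characteristic $2$; the $\pm 1$ bipartition indicator is a kernel vector otherwise) correctly yields part (1). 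But when $f_1 < f_0$, full rank means \emph{surjectivity}, and the characterization changes: bipartite components are now permitted provided they are trees, and non-bipartite components must have exactly as many edges as vertices. A kernel computation cannot detect this --- under the conditions of part (2) the map typically has a large kernel (every tree component contributes a kernel vector) and yet the WLP holds.

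The paper does not reprove this theorem (it is quoted from Dao--Nair), but it records the ingredient you are missing as \Cref{bip}: over a field of characteristic zero the incidence matrix of a graph with $n$ vertices and $b_G$ bipartite connected components has rank $n - b_G$. Since the rank of $M(\Delta,1)$ is the sum of the ranks of the blocks corresponding to the connected components $C$ of $\Delta(1)$, and since a connected $C$ satisfies $|E(C)| \geq |V(C)| - 1$ with equality exactly for trees, while a non-bipartite $C$ satisfies $|E(C)| \geq |V(C)|$, one gets $\mathrm{rank}\, M(\Delta,1) = f_1$ precisely when each bipartite component is a tree and each non-bipartite component has $|E(C)| = |V(C)|$, which is part (2). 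Supplying this rank count (or an equivalent surjectivity argument) is necessary to complete the proof; as written, the proposal proves only part (1).
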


The proof of the theorem above uses the following well known result from graph theory:

\begin{proposition}\label{bip}
    Given a graph $G$ with $n$ vertices and $b_G$ bipartite connected components, the rank of the incidence matrix of $G$ over a field of characteristic zero is given by $n - b_G$.
\end{proposition}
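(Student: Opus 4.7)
The plan is to analyze the left null space of the (unsigned) incidence matrix $M$ of $G$, since the rank equals $n$ minus the dimension of this kernel. First I would observe that, after relabelling rows and columns by connected components, $M$ is block-diagonal, so the rank is additive over connected components. Hence it suffices to prove the following claim: if $H$ is a connected graph on $n_H$ vertices, then the rank of its incidence matrix is $n_H - 1$ when $H$ is bipartite and $n_H$ when $H$ is not.

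To prove the claim, I would characterize vectors $(c_v)_{v \in V(H)}$ in the left kernel: the condition $cM = 0$ says exactly that $c_u + c_v = 0$ for every edge $\{u, v\}$ of $H$. I would fix a root vertex $v_0$ and use connectedness to show that any such vector is determined by $c_{v_0}$: for any other vertex $w$, choosing a walk from $v_0$ to $w$ of length $\ell$ forces $c_w = (-1)^\ell c_{v_0}$. The question of whether this assignment is well-defined is precisely the question of whether all walks between two vertices have the same parity, i.e., whether $H$ is bipartite.

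From here the two cases split cleanly. If $H$ is bipartite, the assignment $c_w = (-1)^{d(v_0, w)} c_{v_0}$ is consistent and satisfies the edge conditions, so the left kernel is exactly one-dimensional and $\reg(M) = n_H - 1$. If $H$ is not bipartite, then $H$ contains an odd closed walk through some vertex $w$, which forces $c_w = -c_w$; here the hypothesis that the base field has characteristic zero enters decisively, giving $c_w = 0$ and hence $c \equiv 0$. Thus the left kernel is trivial and the rank is $n_H$.

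Summing over the connected components, each of the $b_G$ bipartite components contributes a one-dimensional kernel while the non-bipartite components contribute nothing, yielding total nullity $b_G$ and rank $n - b_G$. The argument is essentially routine; the only real subtlety is to note where characteristic zero is used, namely in deducing $c_w = 0$ from $2c_w = 0$, which is exactly the step that would fail in characteristic two and explains why the analogous statement there involves a different invariant.
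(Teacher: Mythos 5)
Your argument is correct and is the standard one; the paper itself offers no proof of this proposition, citing it only as a well-known fact from graph theory, so there is nothing to compare against and your writeup fills that gap adequately. The component-wise reduction, the identification of the vertex-side kernel with assignments satisfying $c_u + c_v = 0$ on edges, and the parity/bipartiteness dichotomy (with characteristic zero, or really characteristic $\neq 2$, used exactly to kill $2c_w = 0$) are all sound; the only blemish is the typo $\reg(M) = n_H - 1$, where you mean the rank of $M$.
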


\begin{remark}
    One particular case of \Cref{daonair} is when $\Delta$ has no isolated vertices and the same number of vertices and edges. Then $A(\Delta)$ has the WLP in degree $1$ if and only if every connected component of $\Delta(1)$ is a tree with a single odd cycle. As we will see, this version of the theorem has been proven from different perspectives to answer different questions.
\end{remark}

\section{Incidence matrices and analytic spread}\label{sectionincidence}

Let $I$ be an ideal of $R$. The algebra $R[It] := \oplus_{i \in \N} I^it^i \subset R[t]$ is called the \textbf{Rees algebra} of $I$.
Let $\m$ be the maximal homogeneous ideal of $R$. The quotient $\F(I) = R[It]/\m R[It]$ is called the \textbf{(special) fiber ring} of $I$. The dimension of the fiber ring of $I$ is called the \textbf{analytic spread} of $I$ and is denoted by $\ell(I)$.

When $I$ is a monomial ideal, the fiber ring $\F(I)$ is the monomial algebra generated by the generators of $I$. We now list some of the connections between the fiber ring of a monomial ideal $I$ and convex geometry that will be useful.

\begin{definition}
    Let $M$ be a set of lattice points in $\Z^n$. The \textbf{rank} of $M$ is the rank of the subgroup of $\Z^n$ generated by $M$ and is denoted by $\rk \Z M$.
\end{definition}

\begin{theorem}[\cite{rankanalyticspread}]\label{dimrank}
    Let $I$ be an equigenerated monomial ideal and $M$ the set of exponents of the generators of $I$. Then

    $$
        \ell(I) = \rk \Z M.
    $$
\end{theorem}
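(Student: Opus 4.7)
The plan is to identify the special fiber ring $\F(I)$ with the monomial $k$-subalgebra $k[m_1,\dots,m_r]$ of $R$ generated by the minimal monomial generators of $I$, and then invoke the standard dimension formula for affine semigroup rings.

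Write $I = (m_1,\dots,m_r)$ with all $m_j$ monomials of a common degree $d$, and let $a_j \in \N^n$ be the exponent of $m_j$, so $M = \{a_1,\dots,a_r\}$. Because $I$ is equigenerated, each power $I^s$ is a monomial ideal whose monomial generating set is
$$
\{m^\alpha := m_1^{\alpha_1}\cdots m_r^{\alpha_r} : \alpha \in \N^r, \, |\alpha|=s\}.
$$
Since every such $m^\alpha$ has the same total degree $sd$, none divides another, so the distinct elements of this set form the minimal monomial generating set of $I^s$. Thus the graded piece $\F(I)_s = I^s/\m I^s$ has a $k$-basis indexed by these distinct monomials. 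The same set serves as a $k$-basis of the degree-$sd$ piece of $k[m_1,\dots,m_r] \subseteq R$. Defining
$$
\varphi : \F(I) \longrightarrow k[m_1,\dots,m_r], \qquad \overline{m^\alpha t^{|\alpha|}} \longmapsto m^\alpha,
$$
one checks on homogeneous bases that $\varphi$ is a graded $k$-algebra isomorphism, so $\ell(I) = \dim \F(I) = \dim k[m_1,\dots,m_r]$.

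It remains to compute $\dim k[m_1,\dots,m_r]$. This subalgebra is a finitely generated integral domain over $k$, so its Krull dimension equals $\mathrm{trdeg}_k k(m_1,\dots,m_r)$. Pick a $\Z$-basis $b_1,\dots,b_s$ of the lattice $L := \Z\langle a_1,\dots,a_r\rangle \subseteq \Z^n$, where $s = \rk \Z M$. The Laurent monomials $x^{b_1},\dots,x^{b_s}$ are algebraically independent over $k$: any polynomial relation would be a finite sum $\sum_\gamma c_\gamma \, x^{\gamma_1 b_1 + \cdots + \gamma_s b_s}$, and distinct $\gamma \in \N^s$ produce distinct exponents by $\Z$-linear independence of the $b_i$, forcing each $c_\gamma$ to vanish. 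On the other hand, since every $a_j$ is a $\Z$-combination of the $b_i$, each $m_j = x^{a_j}$ lies in $k[x^{\pm b_1}, \dots, x^{\pm b_s}]$, and conversely each $x^{b_i}$ is a ratio of products of the $m_j$. Hence $k(m_1,\dots,m_r) = k(x^{b_1},\dots,x^{b_s})$ has transcendence degree exactly $s$, giving $\ell(I) = \rk \Z M$.

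The main obstacle is the gap between the semigroup $\N\langle a_1,\dots,a_r\rangle$ and the group $\Z\langle a_1,\dots,a_r\rangle$: the exponent vectors $a_1,\dots,a_r$ need not themselves be $\Z$-linearly independent, and the cleanest path to a transcendence basis requires admitting negative exponents. This is the essence of the standard identity $\dim k[S] = \rk \Z S$ for any finitely generated subsemigroup $S \subset \Z^n$, and the argument above is the hands-on route to it, sidestepping the full machinery of normal toric varieties.
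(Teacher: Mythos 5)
The paper does not prove this statement; it imports it directly from \cite{rankanalyticspread}, so there is no in-paper argument to compare against. Your proof is correct and is essentially the standard argument behind that citation: for an equigenerated ideal the grading forces the distinct products $m^\alpha$ with $|\alpha|=s$ to be the minimal generators of $I^s$, which gives the graded isomorphism $\F(I)\cong k[m_1,\dots,m_r]$, and the dimension of the resulting affine semigroup ring is computed as the transcendence degree $\rk \Z M$ via a lattice basis of $\Z\langle a_1,\dots,a_r\rangle$. All the steps you flag as potentially delicate (linear independence of the classes in $I^s/\m I^s$, passing from the semigroup to the group it generates) are handled correctly, so this is a complete, self-contained substitute for the external reference.
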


In order to use \Cref{dimrank} as a tool to detect the WLP, we first define the following polynomial ring:

\begin{definition}
    Let $I$ be an Artinian monomial ideal in a polynomial ring $R = k[x_1, \dots, x_n]$. We call the polynomial ring

    $$
        R_I = \C[t_m |m  \text{ is a monomial in $R$, } m \not \in I]
    $$

    the \textbf{incidence ring} of $I$. When $I = I_\Delta + (x_1^2, \dots, x_n^2)$ we write $R_\Delta$ for the incidence ring of $I$. By \Cref{allcombinfo}, there is a bijection between variables of $R_\Delta$ and the faces of $\Delta$, so we write $t_\tau$ for the variable that corresponds to the face $\tau$ under this bijection.

    We will denote by $R_{I, i}$ (or $R_{\Delta, i}$, in the squarefree case) the quotient 
    
    $$
        R_{I, i} := R_I/(t_m | \deg m \neq i).
    $$

    Moreover, we write $\m_{I, i}$ ($\m_{\Delta, i}$, in the squarefree case) for the maximal graded ideal of $R_{I, i}$ (resp. $R_{\Delta, i}$).  
    Note that $\dim R_{\Delta, i} = f_{i - 1}$. 
\end{definition}
Next we define the incidence matrices of a simplicial complex. As we will see, these are exactly the matrices that need to have full rank for the algebra $A(\Delta)$ to have the WLP.

\begin{definition}
    Let $\Delta$ be a simplicial complex and $F_i$ the set of all $i$-dimensional faces of $\Delta$. The matrix $M(\Delta, i)$ is the $f_{i}$ by $f_{i - 1}$ matrix such that the rows are labeled by $i$-faces, the columns are labeled by $i-1$-faces and:

    $$
    M(\Delta, i)_{jk} = 
    \begin{cases}
        1 \ \ \ \text{ if the $j$-th element of $F_{i}$ contains the $k$-th element of $F_{i - 1}$} \\
        0 \ \ \ \text{ otherwise}
    \end{cases}
    $$
    we call this matrix the \textbf{$i$-th incidence matrix} of $\Delta$.
\end{definition}

Taking the rows of a matrix to be the exponents of monomials, we can define the ideals associated to incidence matrices:

\begin{definition}\label{defincidenceideal}
    Let $\Delta$ be a simplicial complex and $M(\Delta, i)$ its $i$-th incidence matrix. The ideal 
    
    $$
        I_\Delta(i) = ( \prod_{\tau \subset \sigma_1, |\tau| = i} t_\tau, \dots, \prod_{\tau \subset \sigma_{f_i}, |\tau| = i} t_\tau) \subset R_{\Delta, i},
    $$
    where $\sigma_i$ are the $i$-dimensional faces of $\Delta$ is called the \textbf{$i$-th incidence ideal} of $\Delta$. 
\end{definition}

\begin{example}\label{example2}
    Let $\Delta$ be the simplicial complex from \Cref{example1}. Then we have:
    
    $$
    M(\Delta, 1) = \begin{blockarray}{ccccc}
        & a & b & c & d \\
      \begin{block}{c(cccc)}
        ab & 1 & 1 & 0 & 0 \\
        ac & 1 & 0 & 1 & 0 \\
        ad & 1 & 0 & 0 & 1 \\
        bc & 0 & 1 & 1 & 0 \\
        bd & 0 & 1 & 0 & 1 \\
        cd & 0 & 0 & 1 & 1 \\
      \end{block}
    \end{blockarray} \ \ \ \ \ \ \ \ \
    M(\Delta, 2) = \begin{blockarray}{ccccccc}
        & ab & ac & ad & bc & bd & cd \\
      \begin{block}{c(cccccc)}
        abc & 1 & 1 & 0 & 1 & 0 & 0 \\
        acd & 0 & 1 & 1 & 0 & 0 & 1 \\
        bcd & 0 & 0 & 0 & 1 & 1 & 1 \\
      \end{block}
    \end{blockarray}
    $$
    
    From the matrices we see that the first incidence ideal of $\Delta$ is 
    
    $$
    I_\Delta(1) = (\underbrace{t_a t_b}_{ab}, \underbrace{t_a t_c}_{ac}, \underbrace{t_a t_d}_{ad}, \underbrace{t_b t_c}_{bc}, \underbrace{t_b t_d}_{bd}, \underbrace{t_c t_d}_{cd}) \subset R_{\Delta, 1},
    $$
    that is, the edge ideal of the $1$-skeleton of $\Delta$. Moreover, we know the second incidence ideal of $\Delta$ is: 
    
    $$
    I_\Delta(2) = (\underbrace{t_{ab} t_{ac} t_{bc}}_{abc}, \underbrace{t_{ac} t_{ad} t_{cd}}_{acd}, \underbrace{t_{bc} t_{bd} t_{cd}}_{bcd}) \subset R_{\Delta, 2}.
    $$
\end{example}

\begin{proposition}\label{basicpropsincidence}
    Let $\Delta$ be a simplicial complex and $I_\Delta(i)$ the i-th incidence ideal of $\Delta$. Then

    \begin{enumerate}
        \item $I_\Delta(i)$ is an equigenerated squarefree monomial ideal generated in degree $i + 1$.
        \item For any two distinct monomials $m_1, m_2$ in a minimal generating set of $I_\Delta(i)$ we have $\deg \gcd(m_1, m_2) \leq 1$.
    \end{enumerate}
\end{proposition}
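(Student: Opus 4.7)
The proposition is essentially a direct combinatorial verification from the definition, so the plan is to unwind what the generators of $I_\Delta(i)$ look like and count.

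For part (1), I would start from \Cref{defincidenceideal}: each generator corresponds to an $i$-dimensional face $\sigma_j$ of $\Delta$ and equals the product $\prod_{\tau \subset \sigma_j,\, |\tau| = i} t_\tau$. Since $\sigma_j$ has $i+1$ vertices, it has exactly $\binom{i+1}{i} = i+1$ distinct subsets of size $i$, so the corresponding product is a squarefree monomial in exactly $i+1$ distinct variables of $R_{\Delta,i}$. This immediately gives that $I_\Delta(i)$ is squarefree and equigenerated in degree $i+1$. The fact that these generators form a minimal generating set follows because distinct $i$-faces give distinct monomials (the set of variables in each generator determines the face: it is the union of the corresponding $i$-subsets).

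For part (2), take two distinct generators $m_1, m_2$ of $I_\Delta(i)$ corresponding to distinct $i$-faces $\sigma_1, \sigma_2$. A variable $t_\tau$ divides both $m_1$ and $m_2$ if and only if $\tau$ is an $i$-element face contained in both $\sigma_1$ and $\sigma_2$, equivalently $\tau \subseteq \sigma_1 \cap \sigma_2$ with $|\tau| = i$. Since $\sigma_1 \neq \sigma_2$ and both have $i+1$ elements, we have $|\sigma_1 \cap \sigma_2| \leq i$. Thus either there are no $i$-subsets of $\sigma_1 \cap \sigma_2$ (when $|\sigma_1 \cap \sigma_2| < i$) or exactly one such subset, namely $\sigma_1 \cap \sigma_2$ itself (when $|\sigma_1 \cap \sigma_2| = i$). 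Because each generator is squarefree, this means $\deg \gcd(m_1, m_2) \leq 1$.

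There is no real obstacle here: both statements are immediate consequences of elementary set-theoretic bookkeeping about subsets of $\sigma_j$ and intersections of pairs of $(i+1)$-element sets. I would simply present the two observations as above, perhaps illustrating with \Cref{example2} where one can see that, e.g., $\gcd(t_{ab}t_{ac}t_{bc}, t_{ac}t_{ad}t_{cd}) = t_{ac}$ has degree $1$, corresponding to the intersection $\{a,b,c\} \cap \{a,c,d\} = \{a,c\}$.
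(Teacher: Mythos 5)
Your proposal is correct and follows essentially the same route as the paper: part (1) is the count of the $i+1$ many $i$-element subsets of an $(i+1)$-element face, and part (2) is the observation that two distinct $(i+1)$-element faces intersect in at most $i$ vertices, hence share at most one $(i-1)$-dimensional face (the paper phrases this contrapositively, but the counting is identical). No gaps.
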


\begin{proof}
    \begin{enumerate}
        \item The number of $i-1$-dimensional faces inside an $i$-dimensional face is always equal to $i + 1$, therefore the generators of $I_\Delta(i)$ are monomial, squarefree and all have degree $i + 1$.
        \item Given two distinct $i - 1$-faces, their union has at least $i + 1$ elements, in particular, if the intersection of two $i$-faces $\sigma, \tau$ contains two $i-1$-faces, then $\sigma = \tau$. In terms of the generators of the ideal, this means if two generators of $I_\Delta(i)$ share two variables, then they must be equal.
    \end{enumerate}
\end{proof}

\begin{example}\label{example3}
    Let $J = (x_1 x_2 x_3, x_3 x_4 x_5, x_5 x_6 x_7, x_1 x_7 x_8) \subset k[x_1, \dots, x_n]$, $n \geq 8$. Is there a simplicial complex $\Delta$ such that $J = I_\Delta(2)$? We can try to answer this question as follows:

        \item 
    If such a $\Delta$ exists, then $x_1$ corresponds to the variable $t_{\{a, b\}}$, where $\{a, b\}$ is an edge of $\Delta$. This implies the generator $x_1 x_2 x_3$ corresponds to the monomial $t_{\{a, b\}}t_{\{a, c\}}t_{\{b, c\}}$, which in turn corresponds to the triangle $\{a, b, c\}$, and more specifically we can say $x_2$ corresponds to the variable $t_{\{a, c\}}$ and $x_3$ to the variable $t_{\{b, c\}}$. Similarly we conclude the generator $x_3 x_4 x_5$ corresponds to the monomial $t_{\{b, c\}}t_{\{b, d\}}t_{\{c, d\}}$ and thus $x_5$ corresponds to the variable $t_{\{\square, d\}}$ where either $\square = b$ or $\square = c$. Notice that if $a$ is a vertex of one of the edges of the triangle associated to the monomial $x_5 x_6 x_7$, then it would imply either $\{a, b\}$ or $\{a, c\}$ is an edge of this triangle, which then would imply either $x_2$ or $x_3$ divides $x_5 x_6 x_7$ (a contradiction). In particular, this triangle must contain a vertex $e$ that is not in the previous triangles and so the vertices of the triangle that corresponds to $x_5 x_6 x_7$ are $\{\square, d, e \}$. Applying the same argument to $x_7$ we conclude $x_1 x_7 x_8$ corresponds to the triangle $\{a, b, e\}$ and thus $x_7$ corresponds to the edge $\{b, e\}$ and in particular $\square = b$. Therefore a simplicial complex $\Delta$ such that $J$ is its second incidence ideal has facets: $\{\{a, b, c\}, \{b, c, d\}, \{b, d, e\}, \{a, b, e\}\}$.
\end{example}

\Cref{basicpropsincidence} gives us necessary conditions for an ideal to be the incidence ideal of some simplicial complex $\Delta$. One natural question that arises is whether these conditions are also sufficient. \Cref{examplefano} tells us that the conditions are not sufficient.
 
\begin{example}\label{examplefano}
    Let 
    $$
       I_F = (x_1 x_2 x_3, x_3 x_4 x_5, x_1 x_5 x_6, x_1 x_4 x_7, x_2 x_5 x_7, x_3 x_6 x_7, x_2 x_4 x_6) \subset k[x_1,\dots, x_n],
    $$
       with $n \geq 7$. Then:
           \item \begin{enumerate}
               \item Without loss of generality we may assume $x_1 x_2 x_3$ corresponds to the triangle $\{a, b, c\}$ where $x_1$ is the edge $\{a, b\}$, $x_2$ is the edge $\{a, c\}$ and $x_3$ is the edge $\{b, c\}$ 
               \item $x_3 x_4 x_5$ corresponds to the triangle $\{b, c, d\}$ where $x_4$ is the edge $\{\square, d\}$ and $x_5$ is the edge $\{\triangle, d\}$, where $b = \square$ and $c = \triangle$, or the opposite
               \item $x_1 x_5 x_6$ corresponds to the triangle $\{a, b, \triangle, d\}$, in particular $\triangle = b$
               \item $x_1 x_4 x_7$ corresponds to the triangle $\{a, b, \square, d\}$, in particular $\square = b$. This is a contradiction since it implies $x_4 = x_5$.
           \end{enumerate}
       
   In particular, $I_F$ satisfies the properties in \Cref{basicpropsincidence} but it is not the incidence ideal of any simplicial complex.
\end{example}

\begin{lemma}\label{matrixrepresentative}
    Let $\Delta$ be a simplicial complex, $A = A(\Delta)$ and $L = x_1 + \dots + x_n$. The matrix that represents the multiplication map $\times L: A_i \to A_{i + 1}$ is $M(\Delta, i)$, where the entries of $M(\Delta, i)$ are taken to be in $k$, the base field of $A$. 
\end{lemma}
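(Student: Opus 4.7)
The plan is to compute the matrix of the multiplication map $\times L : A_i \to A_{i+1}$ directly in the monomial bases supplied by \Cref{allcombinfo}, and then to check that the resulting matrix is precisely $M(\Delta,i)$.

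First I would recall from \Cref{allcombinfo}(1) that $A_i$ has a $k$-basis indexed by the $(i-1)$-faces of $\Delta$, via $F \mapsto x_F$, and $A_{i+1}$ has a $k$-basis indexed by the $i$-faces, via $G \mapsto x_G$. Ordering these two bases in the same way the rows and columns of $M(\Delta,i)$ are ordered, the entry in position $(j,k)$ of the matrix of $\times L$ is, by definition, the coefficient of $x_{G_j}$ (the $j$-th $i$-face) in the expansion of $L \cdot x_{F_k}$ (with $F_k$ the $k$-th $(i-1)$-face) in the chosen basis of $A_{i+1}$.

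Next I would carry out the expansion of $L \cdot x_{F_k}$ in $A(\Delta)$. Writing $L = x_1 + \cdots + x_n$, we have
\[
L \cdot x_{F_k} \;=\; \sum_{v \in F_k} x_v \cdot x_{F_k} \;+\; \sum_{v \notin F_k} x_v \cdot x_{F_k}.
\]
For $v \in F_k$, the relation $x_v^2 = 0$ kills the corresponding term. For $v \notin F_k$, the product $x_v \cdot x_{F_k} = x_{F_k \cup \{v\}}$ equals $0$ in $A(\Delta)$ if $F_k \cup \{v\} \notin \Delta$ (since then the monomial lies in $I_\Delta$), and otherwise is exactly the basis element of $A_{i+1}$ indexed by the $i$-face $F_k \cup \{v\}$. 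Consequently
\[
L \cdot x_{F_k} \;=\; \sum_{\substack{G \text{ an $i$-face}\\ F_k \subset G}} x_G,
\]
so the coefficient of $x_{G_j}$ in $L \cdot x_{F_k}$ is $1$ precisely when $F_k \subset G_j$ and $0$ otherwise.

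Finally I would compare with the definition of $M(\Delta,i)$: its $(j,k)$-entry is $1$ exactly when the $j$-th $i$-face contains the $k$-th $(i-1)$-face, which is what we just computed for the matrix of $\times L$. There is no genuine obstacle here: the only subtle points are bookkeeping (making sure the row/column conventions match the direction of the map $A_i \to A_{i+1}$) and explicitly invoking both relations $x_v^2 = 0$ and $I_\Delta$ to discard the vanishing terms; everything else is a direct monomial computation.
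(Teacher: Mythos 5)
Your proposal is correct and follows essentially the same route as the paper: expand $L\cdot x_{F}$ in the monomial basis, use $x_v^2=0$ and $I_\Delta$ to discard vanishing terms, and match the surviving coefficients with the definition of $M(\Delta,i)$. You simply spell out the bookkeeping more explicitly than the paper does.
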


\begin{proof}
    We know the monomials of $A_i$ are in bijection with the $i-1$-dimensional faces of $\Delta$. For a $i - 1$-dimensional face $\tau$,
    
    $$
    \times L(x_\tau) = L x_\tau = \sum_{\tau \cup \{j\} \in \Delta} x_\tau x_j
    $$
    thus the collumn corresponding to $\tau$ will have $1$ for each $i$-dimensional face $\sigma$ such that $\tau \subset \sigma$, which is the definition of the incidence matrix $M(\Delta, i)$.
\end{proof}

\begin{theorem}[\textbf{WLP from incidence matrices}]\label{wlpspread}
    Let $\Delta$ be a simplicial complex and assume the base field is of characteristic $0$. Then $A(\Delta)$ has the WLP in degree $i$ if and only if 

    $$
        \ell(I_\Delta(i)) = \min(f_{i - 1}, f_i).
    $$
\end{theorem}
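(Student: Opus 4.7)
The plan is to identify the rank of the multiplication-by-$L$ matrix with the analytic spread of $I_\Delta(i)$, using the three ingredients already in hand. The whole proof reduces to recognizing that three different notions of ``rank'' (the rank of a $k$-linear map, the $\mathbb{Z}$-rank of a set of lattice points, and the rank of an integer matrix) coincide in this situation.

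First, I would pin down what the WLP in degree $i$ says at the level of matrices. By Lemma \ref{matrixrepresentative}, the map $\times L \colon A(\Delta)_i \to A(\Delta)_{i+1}$ is represented by the incidence matrix $M(\Delta,i)$, whose entries lie in $\{0,1\} \subset k$. By Proposition \ref{allcombinfo}, $\dim_k A(\Delta)_i = f_{i-1}$ and $\dim_k A(\Delta)_{i+1} = f_i$. Hence the WLP in degree $i$ is equivalent to $\operatorname{rank}_k M(\Delta,i) = \min(f_{i-1}, f_i)$.

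Next, I would translate this into a statement about $I_\Delta(i)$. The crucial observation is that the rows of $M(\Delta,i)$, read as $0/1$-vectors in $\mathbb{Z}^{f_{i-1}}$, are exactly the exponent vectors of the minimal generators of $I_\Delta(i) \subset R_{\Delta,i}$: each $i$-face $\sigma$ indexes the row whose support is the set of $(i-1)$-faces $\tau \subset \sigma$, and by Definition \ref{defincidenceideal} this is the exponent vector of the generator $\prod_{\tau \subset \sigma,\, |\tau|=i} t_\tau$. Writing $M$ for this set of exponent vectors, Proposition \ref{basicpropsincidence}(1) tells us $I_\Delta(i)$ is equigenerated, so Theorem \ref{dimrank} applies and gives $\ell(I_\Delta(i)) = \operatorname{rank}_{\mathbb{Z}} M$.

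The last step is to compare $\operatorname{rank}_k M(\Delta,i)$ with $\operatorname{rank}_{\mathbb{Z}} M$. Since $M(\Delta,i)$ has integer entries and $k$ has characteristic zero, its rank over $k$ equals its rank over $\mathbb{Q}$ (a $k$-linear dependence among rows, expressed in a $\mathbb{Q}$-basis of $k$, forces a $\mathbb{Q}$-linear dependence), which in turn equals $\operatorname{rank}_{\mathbb{Z}} M$ since this is the rank of the row space. Stringing the three identifications together gives
\[
\text{WLP in degree $i$} \iff \operatorname{rank}_k M(\Delta,i) = \min(f_{i-1}, f_i) \iff \ell(I_\Delta(i)) = \min(f_{i-1}, f_i),
\]
which is the statement of the theorem. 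There is no serious obstacle; the only thing requiring any care is the reduction from $\operatorname{rank}_k$ to $\operatorname{rank}_{\mathbb{Z}}$, which is precisely where the characteristic-zero hypothesis is consumed, and whose failure in positive characteristic is exactly what Corollary \ref{failurewlpboundintro} is designed to quantify.
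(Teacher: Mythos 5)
Your proof is correct and follows essentially the same route as the paper's: identify the WLP in degree $i$ with full rank of $M(\Delta,i)$ via \Cref{matrixrepresentative}, use characteristic zero to equate the $k$-rank with the $\mathbb{Z}$-rank of the row set, and apply \Cref{dimrank} to identify that with $\ell(I_\Delta(i))$. You spell out a few details the paper leaves implicit (the dimensions of $A_i$ from \Cref{allcombinfo}, the equigeneration hypothesis from \Cref{basicpropsincidence}), but the argument is the same.
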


\begin{proof}
    Let $M = \{\alpha_1, \dots, \alpha_{f_i}\}$ be the set of rows of $M(\Delta, i)$. The rank of the matrix $M(\Delta, i)$ is by definition the dimension of the space spanned by its rows. Since the field is of characteristic zero, this dimension is exactly $\rk \Z M$, therefore by \Cref{dimrank}

    $$
        \ell(I_\Delta(i)) = \rk \Z M = \rk M(\Delta, i).
    $$

    In particular, $M(\Delta, i)$ has full rank if and only if 
    $$
    \ell(I_\Delta(i)) = \min (f_{i - 1}, f_{i}).
    $$.
\end{proof}

\begin{remark}
    Connections between the bipartite property for graphs and algebraic invariants of edge ideals that are related to analytic spread can be found throughout the literature, see for example \cite{alilooee2018generalized} and \cite{bircomb}.
    The first incidence ideal of a simplicial complex $\Delta$ is the edge ideal of $\Delta(1)$, therefore, one can think of \Cref{daonair} as a generalization of these results in the language of the WLP.
\end{remark}

\begin{remark}
    All of the results and definitions above can be generalized to arbitrary monomial ideals $I$ such that $R/I$ is Artinian. The only difference is that when there is a nonzero nonsquarefree monomial in $R/I$, then one defines the incidence ideal to be generated by powers of the monomials in \Cref{defincidenceideal}, forcing the incidence ideal to be equigenerated.
\end{remark}

\begin{example}\label{example4}
    Let $\Delta$ be the simplicial complex in \Cref{example1} and assume the base field has characteristic zero. Using Macaulay2 \cite{M2} we can easily check that for $I_\Delta(1) = (t_a t_b, t_a t_c, t_a t_d, t_b t_c, t_b t_d, t_c t_d) \subset R_{\Delta, 1}$ and $I_\Delta(2) = (t_{ab} t_{ac} t_{bc}, t_{ac} t_{ad} t_{cd}, t_{bc} t_{bd} t_{cd}) \subset R_{\Delta, 2}$ we have:
    
    \begin{enumerate}
        \item $\ell(I_\Delta(1)) = 4 = f_0$
        \item $\ell(I_\Delta(2)) = 3 = f_2$
    \end{enumerate}

    By \Cref{wlpspread}, $\Delta$ has the WLP in all degrees.
\end{example}

Next we give a criterion for the Artinian algebra $A(\Delta)$ to satisfy the SLP in degree $1$ in terms of facet ideals.

\begin{definition}
    Let $\Delta$ be a simplicial complex on vertex set $\{1, \dots, n\}$ with facets $\{Q_1, \dots, Q_s\}$ and $k$ a field. The ideal 
    $$
        \F(\Delta) = (\prod_{j \in Q_1}t_j, \dots, \prod_{j \in Q_s}t_j) \subset R_{\Delta, 1}
    $$
    is called the \textbf{facet ideal} of $\Delta$. More generally, we write $\F(\Delta, i)$ for the ideal generated by the $i$-dimensional facets of $\Delta$. Note that when $\Delta$ is pure, $\F(\Delta, i)$ is the facet ideal of the $i$-skeleton $\Delta(i)$.
\end{definition}

\begin{theorem}[\textbf{SLP in degree one in terms of skeletons}]\label{slp1}
    Let $\Delta$ be a simplicial complex. Then $\Delta$ has the SLP in degree 1 and characteristic zero if and only if the following holds for all $d$:

    $$
        \ell(\F(\Delta(d), d)) = \min (f_{0}, f_d).
    $$

    In particular, if $\Delta$ is pure, then $A(\Delta)$ has the SLP in degree $1$ and characteristic zero if and only if the following holds for all $d$:

    $$
        \ell(\F(\Delta(d))) = \min(f_{0}, f_d).
    $$
\end{theorem}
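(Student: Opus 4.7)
The plan is to mirror the strategy of \Cref{wlpspread}: compute the matrix of the multiplication map $\times L^d : A_1 \to A_{1+d}$ explicitly, identify its rows as the exponent vectors of a squarefree monomial ideal, and invoke \Cref{dimrank} to translate the full-rank condition into an equality of analytic spreads.

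First, in $A = A(\Delta)$ we have $x_i^2 = 0$ for all $i$, so the multinomial expansion of $L^d = (x_1 + \cdots + x_n)^d$ collapses; since $d!$ is a unit in characteristic zero,
$$
L^d \;=\; d!\sum_{\substack{F \subseteq [n] \\ |F| = d}} x_F \pmod{(x_1^2,\dots,x_n^2)},
$$
where $x_F = \prod_{i \in F} x_i$. Multiplying by a vertex variable $x_v$ kills every term with $v \in F$ or $F \cup \{v\} \notin \Delta$, leaving
$$
L^d \cdot x_v \;=\; d! \sum_{\substack{\sigma \in \Delta \\ |\sigma|=d+1,\ v \in \sigma}} x_\sigma.
$$
By \Cref{allcombinfo}, the matrix of $\times L^d$ in the face bases is therefore $d!$ times the $f_d \times f_0$ vertex-to-$d$-face incidence matrix $N(\Delta, d)$ whose $(\sigma, v)$-entry is $1$ exactly when $v \in \sigma$. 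Since $d! \neq 0$, the map $\times L^d$ has full rank if and only if $\rk N(\Delta, d) = \min(f_0, f_d)$.

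Next, read each row of $N(\Delta, d)$ as the exponent vector (in the variables of $R_{\Delta, 1}$) of the monomial $\prod_{v \in \sigma} t_v$ attached to a $d$-face $\sigma \in \Delta$. Because all strictly higher faces are deleted when forming $\Delta(d)$, every such $\sigma$ is a $d$-dimensional facet of $\Delta(d)$, and so these monomials are precisely the minimal generators of $\F(\Delta(d), d)$. Applying \Cref{dimrank} to this equigenerated monomial ideal gives $\rk N(\Delta, d) = \ell(\F(\Delta(d), d))$, and quantifying over all $d$ yields the claimed characterization of the SLP in degree $1$.

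For the ``In particular'' clause, assume $\Delta$ is pure of dimension $D$. For $d \leq D$, every $d$-face of $\Delta$ lies inside some $D$-dimensional facet and hence is a facet of $\Delta(d)$; purity of $\Delta$ precludes any facet of $\Delta(d)$ of dimension strictly less than $d$. Thus $\F(\Delta(d)) = \F(\Delta(d), d)$ for $d \leq D$, while for $d > D$ both ideals and $f_d$ vanish and the condition is vacuous. The main obstacle is the initial bookkeeping: pinning down the factor $d!$ and the correct row/column indexing so that the incidence matrix genuinely represents $\times L^d$; once that translation is in place, \Cref{dimrank} supplies the analytic-spread reformulation at no additional cost.
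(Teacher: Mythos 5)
Your proposal is correct and follows essentially the same route as the paper: compute $\times L^d$ on the vertex basis, identify its matrix as $d!$ times the vertex--$d$-face incidence matrix whose rows are the exponent vectors of the generators of $\F(\Delta(d),d)$, and apply \Cref{dimrank} in characteristic zero; the purity argument for the ``in particular'' clause also matches the paper's observation that $\Delta(d)$ is pure so $\F(\Delta(d),d)=\F(\Delta(d))$. No gaps.
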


\begin{proof}
    To prove that $A(\Delta)$ has the SLP in degree $1$, we need to check that the maps $\times L^d: A_1 \to A_{d + 1}$ have full rank for all $d$. Note that for every $x_j$ we have

    $$
        L^d x_j  = d! x_j\sum_{\{i_1, \dots, i_d\} \in \Delta} x_{i_1}\dots x_{i_d} = d! \sum_{\{j\} \cup \{i_1, \dots, i_d\} \in \Delta} x_j x_{i_1} \dots x_{i_d}.
    $$

    That is, the matrix that represents the linear transformation $\times L^d$ is:

    $$
    [\times L^d]_{jk} = \begin{cases}
        d! \ \ \ \text{ if the $j$-th $d$-face of $\Delta$ contains the $k$-th vertex of $\Delta$} \\
        0 \ \ \ \text{ otherwise}.
    \end{cases}
    $$

    Note that the rows of $[\times L^d]$ are in bijection with the facets of $\Delta(d)$ that have dimension $d$. Moreover, since the rows of $[\times L^d]$ are exactly the exponent vectors (after rescaling, which does not affect the rank in characteristic zero) of the monomials $\prod_{j \in \tau} x_j$, where $\tau$ is a facet of $\Delta(d)$ of dimension $d$, we conclude by \Cref{dimrank} that $[\times L^d]$ has full rank if and only if 

    $$
        \ell(\F(\Delta(d), d)) = \min(f_{0}, f_{d}).
    $$

    If $\Delta$ is pure, then $\Delta(d)$ is pure for every $d$, which means $\F(\Delta(d), d) = \F(\Delta(d))$, so the particular case follows.
\end{proof}

\begin{example}\label{example5}
    The simplicial complex $\Delta$ from \Cref{example1} is clearly a pure complex. Using Macaulay2 \cite{M2} we can easily compute 
    
    $$
        \ell(\F(\Delta)) = \ell(\F(\Delta(2))) = 3
    $$
    and 
    $$
        \ell(\F(\Delta(1))) = \ell(I_\Delta(1)) = 4,
    $$
    so by \Cref{slp1} $\Delta$ has the SLP in degree $1$ and characteristic zero.
\end{example}

\section{Birational Combinatorics}

Next we focus on rational maps from projective spaces defined by monomials. In \cite{bircomb}, Simis and Villarreal used the expression \textit{Birational Combinatorics} to describe the study of such maps. Most of their work was focused on maps $\Pp^n \dasharrow \Pp^n$. Here we state some of their results and mention their connections to the WLP. In later sections we will apply these results to the study of the WLP in positive characteristics.

\begin{definition}
    The incidence matrix of a graph $G = (V, E)$ with loops is the following:
    
    $$
        M(G)_{ij} = \begin{cases}
            1 \ \ \ \text{ if the edge $i$ is incident to the vertex $j$ and is not a loop} \\
            2 \ \ \ \text{ if the edge $i$ is a loop incident to $j$} \\
            0 \ \ \ \text{ otherwise}
        \end{cases}
    $$
\end{definition}

\begin{definition}
    Given a set of monomials $M$ in $k[x_1, \dots, x_n]$ the \textbf{log-matrix} of $M$ is the matrix $\log(M)$ such that its rows are the exponent vectors of the monomials in $M$. If the sum of every row of a matrix $N$ is equal to some fixed number $d$, we say $N$ is a \textbf{$d$-stochastic} matrix. In particular, if the monomials in $M$ have the same degree $d$, the log-matrix of $M$ is $d$-stochastic.

    If $M$ is a set of monomials of degree $2$, then $\log(M)$ is the incidence matrix of a graph $G$ (possibly with loops), the graph $G$ is called the \textbf{underlying graph} of $M$.
\end{definition}

\begin{definition}
    Given an ideal $I \subset k[x_1, \dots, x_n]$, there exists a surjective map $\psi : S(I) \to R[It]$, where $S(I)$ is the symmetric algebra of $I$. If $\psi$ is an isomorphism, the ideal $I$ is said to be of \textbf{linear type}. 
\end{definition}
 
\begin{definition}
    A finite set of monomials $M = \{m_1, \dots, m_s\} \subset k[x_1, \dots, x_n]$ of the same degree $d$ defines the following rational map:
    
    $$
        \varphi_M: \Pp^{n - 1} \dasharrow \Pp^{s - 1}, \ \ \ \varphi_M(x_1 : \dots : x_n) = (m_1 : \dots : m_s)
    $$

    If $n = s$ and $\varphi_M$ has an inverse rational map $\varphi'$ we say $\varphi_M$ is a \textbf{Cremona transformation} of $\Pp^{n - 1}$.
\end{definition}
 
One of the main applications given in \cite{bircomb} is the following description of birational monomial maps of degree $2$.

\begin{theorem}[\cite{bircomb}, Proposition 5.1]\label{simisvillarreal}
    Let $M \subset k[x_1, \dots, x_n]$ be a finite set of $n$ monomials of degree two with no non-trivial common factors such that the underlying graph $G$ is connected. Let $N$ denote the $n \times n$ incidence matrix of $G$. The following are equivalent:

    \begin{enumerate}
        \item det $N \neq 0$
        \item $\varphi_M$ is a Cremona transformation of $\Pp^{n - 1}$
        \item Either
            \begin{enumerate}
                \item $G$ has no loops and has a unique cycle of odd length
                \item $G$ is a tree with exactly one loop
            \end{enumerate}
        \item The ideal $(M) \subset k[x_1, \dots, x_n]$ is of linear type.
    \end{enumerate}
\end{theorem}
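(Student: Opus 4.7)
The plan is to establish the equivalences by first identifying the combinatorial structure forced by the hypotheses and then cycling through the four conditions. Since $M$ consists of $n$ degree-two monomials with no nontrivial common factor, every variable appears in some generator, so $G$ has exactly $n$ vertices and $n$ edges. Connectivity of $G$ forces the cyclomatic number $|E|-|V|+1 = 1$, so $G$ contains a unique cycle (with loops counted as length-one cycles). Accordingly, either $G$ has no loops and its unique cycle has length $\geq 3$, or $G$ consists of a spanning tree together with exactly one loop; condition (3) asserts that this unique cycle is either odd or a loop.

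For $(1) \Leftrightarrow (3)$, I would compute $\det N$ in each structural case. If $G$ has a unique odd cycle and no loops, $G$ is non-bipartite, so \Cref{bip} gives $\mathrm{rank}(N) = n$. If $G$ is a spanning tree plus one loop $\ell$ at a vertex $v$, then the row of $N$ corresponding to $\ell$ is $2 e_v^{\top}$; expanding along this row yields $\det N = \pm 2 \det N'$, where $N'$ is the $(n-1) \times (n-1)$ reduced incidence matrix of the tree rooted at $v$, a well-known unimodular matrix. Conversely, if $G$ has no loops and the unique cycle has even length, then $G$ is bipartite and \Cref{bip} forces $\mathrm{rank}(N) = n-1$, so $\det N = 0$.

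For $(1) \Leftrightarrow (2)$, I would invoke the toric interpretation of monomial maps. Since $N$ is $2$-stochastic, the all-ones vector $\mathbf{1}$ is a right eigenvector of $N$ with eigenvalue $2$; choosing an integer basis of $\Z^n$ containing $\mathbf{1}$ produces an upper triangular block decomposition and yields $\det N = 2 \det N_{\mathrm{red}}$ for some $(n-1) \times (n-1)$ integer matrix $N_{\mathrm{red}}$. The map $\varphi_M$ restricts to a map of quotient tori $(k^{*})^{n-1} \to (k^{*})^{n-1}$ of degree $|\det N_{\mathrm{red}}|$, so $\varphi_M$ is birational exactly when $|\det N| = 2$. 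Combined with the case analysis above, which actually shows $\det N \in \{0, \pm 2\}$ in every instance, this gives the desired equivalence.

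The remaining equivalence $(3) \Leftrightarrow (4)$ is where I expect the principal obstacle to lie. The idea is to use the classical description of the kernel of the natural surjection $S(I) \twoheadrightarrow R[It]$ for monomial ideals in terms of binomial relations supported on even closed walks of $G$: since $G$ has cyclomatic number $1$, the only candidate relation comes from the unique cycle, and this relation is nontrivial iff the cycle is even. Adapting the argument to accommodate a loop (corresponding to $x_i^2$) requires verifying that the factor of $2$ appearing in the loop row does not produce an independent syzygy; this is the same phenomenon behind $|\det N| = 2$ in case (3b), and once this is in hand the cycle of implications closes.
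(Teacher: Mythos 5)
First, there is nothing in the paper to compare your argument against: the statement is quoted from Simis and Villarreal (\cite{bircomb}, Proposition 5.1) and used as a black box, with no in-paper proof. Judged on its own, your structural setup is right and your treatment of $(1)\Leftrightarrow(3)$ is essentially complete: connectivity together with $|E|=|V|=n$ forces cyclomatic number one, and the three cases (unique odd cycle, spanning tree plus loop, unique even cycle) are handled correctly --- the even case via \Cref{bip}, the loop case by expanding against the row $2e_v^{\top}$ and using unimodularity of the reduced incidence matrix of a tree. (One cosmetic slip: the fact that every variable occurs follows from connectivity of the $n$-vertex graph $G$, not from the gcd hypothesis.)

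Two gaps remain. The smaller one is in $(1)\Leftrightarrow(2)$: your lattice argument correctly reduces birationality of $\varphi_M$ to $|\det N|=2$ (this is \Cref{DPB} for $d=2$), but you then assert that the case analysis ``actually shows $\det N\in\{0,\pm 2\}$ in every instance,'' whereas in the odd-cycle case you only proved $\det N\neq 0$ via the rank formula of \Cref{bip}. To conclude $(1)\Rightarrow(2)$ you must compute that the incidence determinant of a connected unicyclic graph with an odd cycle is $\pm 2$ (leaf-strip down to the cycle, whose $k\times k$ incidence matrix has determinant $1+(-1)^{k+1}$); this is routine but currently missing. The substantive gap is condition $(4)$: you explicitly defer the equivalence with linear type to an ``adaptation'' of the even-closed-walk description of the kernel of $S(I)\twoheadrightarrow R[It]$ that you do not carry out, in particular the case of a loop, i.e.\ a generator $x_i^2$. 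Since $(4)$ is one of the four conditions to be proved equivalent, the proof is incomplete as written; for loopless graphs the needed statement is Villarreal's theorem on Rees algebras of edge ideals (\cite{edgelineartype}, cited in this paper), and the loop case genuinely requires an argument rather than the determinant heuristic you gesture at.
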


\begin{remark}
    \Cref{simisvillarreal} and \Cref{daonair} use very similar techniques to answer different questions from different fields, each with their natural restrictions. On one hand, \Cref{daonair} gives a criterion for the incidence matrix of any graph (without loops) to have full rank, regardless of whether the graph has more vertices or edges.
 
    On the other hand, the underlying graphs associated to rational maps $\Pp^{n  -1} \dasharrow \Pp^{n - 1}$ defined by degree $2$ monomials may have loops, but have the restriction that they must have the same number of vertices and edges.

    It is clear that in the case where the restrictions of the two areas intersect (connected graphs without loops and the same number of vertices and edges), then conditions $1.$ and $3.$ of \Cref{simisvillarreal} and \Cref{daonair} coincide.
\end{remark}

\begin{example}\label{example6}
    Let $G$ be the graph with vertex set $\{a,b,c,d\}$ and edges 
    
    $$
        \{\{a, c\}, \{a, d\}, \{b, c\}, \{c, d\}\}.
    $$ 
    
    Assume the base field has characteristic $0$. Then $G$ is a connected graph that contains only one odd cycle. By \Cref{simisvillarreal} the map $\Pp^{3} \dasharrow \Pp^3$ defined by $[a : b : c : d] \mapsto [ac : ad : bc : cd]$ is birational.

    Let $\Delta$ be the simplicial complex from \Cref{example1}. Note that $\Delta(1)$ contains $G$ as a subcomplex. This implies the incidence matrix of $G$ is a maximal square submatrix of $M(\Delta, 1)$. In particular, the determinant of the incidence matrix of $G$ is a maximal minor of $M(\Delta, 1)$. Again by \Cref{simisvillarreal} we know this determinant is not zero, and thus $M(\Delta, 1)$ has full rank. We have proven that $\Delta$ has the WLP in degree $1$ by finding a subcomplex $\Delta' \subset \Delta$ such that $f_0(\Delta) = f_0(\Delta') = f_1(\Delta')$ that has the WLP in degree $1$.
\end{example}

\begin{example}
    Consider the simplicial complex $\Gamma$ below:

    \begin{center}
    \begin{tikzpicture}[x=0.75pt,y=0.75pt,yscale=-1,xscale=1]
        
        \draw  [fill={rgb, 255:red, 155; green, 155; blue, 155 }  ,fill opacity=1 ] (135,122) -- (170,162) -- (100,162) -- cycle ;
        \draw  [fill={rgb, 255:red, 155; green, 155; blue, 155 }  ,fill opacity=1 ] (205,122) -- (240,162) -- (170,162) -- cycle ;
        \draw  [fill={rgb, 255:red, 155; green, 155; blue, 155 }  ,fill opacity=1 ] (170,162) -- (205,202) -- (135,202) -- cycle ;
        
        \draw (92,162) node [anchor=north west][inner sep=0.75pt]   [align=left] {$\displaystyle 1$};
        \draw (126,104) node [anchor=north west][inner sep=0.75pt]   [align=left] {$\displaystyle 2$};
        \draw (166,140) node [anchor=north west][inner sep=0.75pt]   [align=left] {$\displaystyle 3$};
        \draw (200,102) node [anchor=north west][inner sep=0.75pt]   [align=left] {$\displaystyle 4$};
        \draw (241,142) node [anchor=north west][inner sep=0.75pt]   [align=left] {$\displaystyle 5$};
        \draw (131,205) node [anchor=north west][inner sep=0.75pt]   [align=left] {$\displaystyle 6$};
        \draw (207,205) node [anchor=north west][inner sep=0.75pt]   [align=left] {$\displaystyle 7$};

        \end{tikzpicture}
    \end{center}

    The results in \cite{alisara} imply the facet ideal of $\Gamma$ is of linear type. Taking the vertices of $\Gamma$ to be the edges of another simplicial complex $\Delta$ (i.e, we want the facet ideal of $\Gamma$ to be the second incidence ideal of $\Delta$), we get the following:
    
    \begin{itemize}
        \item[] $1 \leftrightarrow \{a, b\}$, $2 \leftrightarrow \{a, c\}$, $3 \leftrightarrow \{b, c\}$, 
        $4 \leftrightarrow \{b, d\}$, $5 \leftrightarrow \{c, d\}$, $6 \leftrightarrow \{b, e\}$, $7 \leftrightarrow \{c, e\}$
    \end{itemize}

    In particular, the facet ideal of $\Gamma$ is the second incidence ideal of 
    $$
        \Delta = \{\{a, b, c\}, \{b, c, d\}, \{b, c, e\}\}.
    $$

    Moreover, we can verify that the algebra $A(\Delta)$ has the WLP in degree $2$ in every characteristic (at least one of the maximal minors of $M(\Delta, 2)$ is $\pm 1$).
\end{example}

The following result from \cite{bircomb} sheds some light into the failure of the WLP in positive characteristics. This will be our focus in future sections.

\begin{theorem}[\cite{detbir}, Proposition 2.1]\label{DPB}
    Let $M$ be a finite set of monomials of the same degree $d \geq 1$. Then $\varphi_M$ is a Cremona transformation if and only if 
    $$
        \det (\log(M)) = \pm d.
    $$
\end{theorem}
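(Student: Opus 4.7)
The plan is to reduce the birationality of $\varphi_M$ to a determinant computation on character lattices, using the open torus of $\Pp^{n-1}$. Setting $T = (k^*)^n$, consider the toric endomorphism $\tilde{\varphi}: T \to T$ defined by $(x_1, \dots, x_n) \mapsto (m_1, \dots, m_n)$. Since birationality of a rational map between irreducible varieties of equal dimension can be tested on any dense open subset, I would reduce to computing the degree of the map $\bar{\varphi}$ induced by $\tilde{\varphi}$ on the quotient torus $T/\Delta$ of $\Pp^{n-1}$, where $\Delta = \{(t,\dots,t) : t \in k^*\}$ is the scalar diagonal.

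The pullback $\tilde{\varphi}^*: X(T) = \Z^n \to \Z^n$ sends the standard basis vector $e_i$ to $\alpha_i$, the $i$-th row of $\log(M)$, so $\tilde{\varphi}^*$ is represented by $\log(M)^T$. The character lattice of $T/\Delta$ is the zero-sum sublattice $Z = \{\alpha \in \Z^n : \sum \alpha_i = 0\}$, and since every row of $\log(M)$ sums to $d$, a direct check gives $\tilde{\varphi}^*(Z) \subset Z$, producing a restriction $\bar{\varphi}^*: Z \to Z$. Assembling these with the sum-of-coordinates map $\Sigma: \Z^n \to \Z$ yields a commutative diagram of short exact sequences whose two rows are $0 \to Z \to \Z^n \to \Z \to 0$, with vertical arrows $\bar{\varphi}^*$, $\tilde{\varphi}^*$, and multiplication by $d$ (the rightmost arrow is multiplication by $d$ because $\Sigma(\alpha_i) = d$ for every $i$). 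Multiplicativity of the determinant across the sequence then gives
$$
\det(\log(M)) = \det(\tilde{\varphi}^*) = d \cdot \det(\bar{\varphi}^*).
$$

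Finally, since the degree of a dominant toric endomorphism equals the absolute value of the determinant of the induced character-lattice map, $\varphi_M$ is a Cremona transformation if and only if $|\det(\bar{\varphi}^*)| = 1$, equivalently $\det(\log(M)) = \pm d$. The one subtlety is the degenerate case $\det(\log(M)) = 0$: here $\bar{\varphi}^*$ is non-injective, so $\tilde{\varphi}$ and hence $\varphi_M$ fail to be dominant, and the biconditional holds vacuously. The main technical point to verify carefully is the determinant identity across the short exact sequence, which I would handle either by base change to $\Q$ followed by a block-triangular computation with respect to a splitting of the sequence, or by an appropriate Smith normal form argument on $\log(M)$.
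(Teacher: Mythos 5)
The paper never proves this statement --- it is quoted directly from Simis--Villarreal \cite{detbir}, Proposition 2.1, and used as a black box --- so there is no internal proof to compare against. That said, your lattice-theoretic argument is correct and is essentially the standard proof of this fact. The key points all check out: $\tilde{\varphi}^*(Z)\subset Z$ because each row of $\log(M)$ sums to $d$; the induced map on $\Z^n/Z\cong\Z$ is multiplication by $d$ for the same reason; the sequence $0\to Z\to\Z^n\to\Z\to 0$ splits (the quotient is free), so determinant multiplicativity gives $\det(\log(M))=d\cdot\det(\bar{\varphi}^*)$ with $\det(\bar{\varphi}^*)\in\Z$ (which also recovers the paper's remark that $d$ divides the determinant of any $d$-stochastic matrix); and birationality of $\varphi_M$ can be tested on the dense torus, where the degree of the dominant monomial map equals the index $[Z:\bar{\varphi}^*(Z)]=|\det(\bar{\varphi}^*)|$. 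One point worth making explicit, since the present paper applies \Cref{DPB} to study the WLP in positive characteristic: the identification of the degree of the function-field extension with the lattice index holds in every characteristic, even though the extension may be purely inseparable (e.g.\ $x\mapsto x^p$ in characteristic $p$ has degree $p$, not $1$); the monomials representing coset representatives of $Z/\bar{\varphi}^*(Z)$ remain linearly independent over the subfield, so the count is unaffected. Your handling of the degenerate case $\det(\log(M))=0$ (non-dominant, hence not Cremona, and $0\neq\pm d$ since $d\geq 1$) is also fine.
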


The next example shows how the failure of the WLP in positive characteristics is connected to a rational map not being birational.

\begin{example}\label{examplenonzerodet}
    Let $I = (x_3 x_4, x_4 x_6, x_1 x_7, x_4 x_7, x_5 x_7) \subset R = k[x_1, \dots, x_7]$ and $A = R/((x_1^2, \dots, x_7^2) + I)$. The multiplication map $\times L: A_2 \to A_3$, where $L = x_1 + \dots + x_7$ is a $16 \times 16$ square matrix with constant row sum $3$ and nonzero determinant, but the rational map defined by this matrix is not birational since the determinant is $-192$, instead of $\pm 3$. In particular, the first equivalence in \Cref{simisvillarreal} only works for monomial rational maps of degree $2$. Moreover, since $192 = 2^6 3$, we see that $A$ fails the WLP in characteristics $2$ and $3$.
\end{example}

\Cref{DPB} is a criterion for a map to be birational in terms of a determinant. It is useful to note that the determinant of a $d$-stochastic matrix is always divisible by $d$.

\Cref{simisvillarreal} together with \Cref{DPB} describe every possible value of the determinant of a $2$-stochastic square matrix with only $0, 1$ and $2$ as entries. \Cref{matrixrepresentative} describes the matrices that represent the multiplication maps that determine the WLP for Artinian algebras that are quotients of monomial ideals. The last result of this section is an application of \Cref{simisvillarreal} to the maximal minors of $2$-stochastic matrices that have more rows than columns. This result will then be used to describe the failure of the WLP in positive characteristics in degree $1$ in later sections.

\begin{corollary}\label{maincor}
    Let $M$ be a matrix with more rows than columns such that all of its entries are either $0, 1$ or $2$ and assume the sum of the entries in every row is $2$. Then every maximal minor of $M$ is either $0$ or its absolute value is a power of $2$.
\end{corollary}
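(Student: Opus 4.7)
The plan is to reduce any maximal minor of $M$ to a product of block determinants indexed by the connected components of an associated graph, and then apply \Cref{simisvillarreal} and \Cref{DPB} to each block. Let $N$ be any $n \times n$ submatrix of $M$, where $n$ is the number of columns of $M$. Then $N$ inherits from $M$ the property of being $2$-stochastic with entries in $\{0,1,2\}$. Interpreting each row of $N$ as the exponent vector of a degree-$2$ monomial in $n$ variables, one obtains a multigraph $G$ on $n$ vertices with $n$ edges, where a row with a single entry $2$ encodes a loop. If any two rows of $N$ coincide, then $\det N = 0$; so we may assume the edges of $G$ are distinct.

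Next, decompose $G$ into connected components $G_1, \ldots, G_c$, with $G_i$ having $n_i$ vertices and $e_i$ edges. After permuting the rows of $N$ by the component of their edge and the columns by the component of their vertex, $N$ becomes block diagonal with blocks $N_i$ of size $e_i \times n_i$, satisfying $\sum e_i = \sum n_i = n$. If $e_i > n_i$ for some $i$, the $e_i$ rows of $N_i$ lie in an $n_i$-dimensional space and are linearly dependent, so $\det N = 0$. Otherwise $e_i = n_i$ for every $i$, and each $G_i$ is connected and unicyclic. If the unique cycle of some $G_i$ is an even cycle and $G_i$ carries no loop, then $G_i$ is a bipartite component without loops, and \Cref{bip} forces $\mathrm{rank}(N_i) = n_i - 1$, giving $\det N = 0$. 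We may therefore assume that every $G_i$ has either a unique odd cycle or a loop.

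Under these hypotheses, \Cref{simisvillarreal} applied to $G_i$ shows that $\varphi_{M_i}$ is a Cremona transformation of $\Pp^{n_i - 1}$, and \Cref{DPB} then yields $|\det N_i| = 2$. Multiplying blockwise gives $|\det N| = 2^c$, a power of $2$, as required. The main obstacle in this last step is that \Cref{simisvillarreal} assumes the monomials share no non-trivial common factor; such a common factor corresponds to a vertex $v$ of $G_i$ incident to every edge of $G_i$, and in the $e_i = n_i$ unicyclic case this forces $G_i$ to be a star centered at $v$ with a loop at $v$. I would settle this borderline case by a direct expansion of $\det N_i$ along the loop's row, which immediately gives $|\det N_i| = 2$ in agreement with the general formula.
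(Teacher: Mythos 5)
Your proof is correct and takes essentially the same route as the paper's: reduce to square $2$-stochastic submatrices, decompose according to the connected components of the underlying graph with loops, and apply \Cref{simisvillarreal} together with \Cref{DPB} to each diagonal block, treating the star-with-a-loop (common-factor) case by a direct determinant computation. If anything, you are slightly more explicit about the degenerate cases (the $e_i > n_i$ blocks and the even-cycle components, the latter of which the paper absorbs into the equivalence of (1) and (3) in \Cref{simisvillarreal}), but the decomposition and the key lemmas are identical.
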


\begin{proof}
    Since $M$ has more rows than columns, maximal minors of $M$ are determinants of submatrices of $M$ after deleting rows. Deleting rows does not change the row sum of other rows, so every maximal submatrix of $M$ also has constant row sum. In particular, every maximal submatrix can be seen as the log-matrix of a rational monomial map $\Pp^n \dasharrow \Pp^n$, where $n + 1$ is the number of columns of $M$. Note that if the gcd of all the monomials defining this rational map is not $1$, then either there are repeated monomials, in which case the value of the minor is $0$, or the underlying graph is a star with a loop, in which case the determinant of the log-matrix is $2$, so we may assume the gcd of all the monomials is $1$. We have two cases:

    \begin{enumerate}
        \item Assume first that the underlying graph of this rational map is connected. By \Cref{DPB} and \Cref{simisvillarreal}, either this maximal minor is $0$ (in case the map is not birational), or the map is birational and thus the maximal minor is $\pm 2$. 
        \item If the underlying graph is not connected, it is possible to rearrange the rows and columns so that the only nonzero entries of the matrix are square diagonal blocks (not necessarily of the same size) each block corresponds to a connected component of the underlying graph. We can then apply the argument above to each square diagonal block, and since the determinant of this submatrix is the product of the determinants of the diagonal blocks, it is either $0$ or a product of numbers which have absolute value $2$, so the result holds.
    \end{enumerate}

\end{proof}

\section{Mixed Multiplicities and the failure of the WLP in positive characteristics}

We now briefly introduce mixed multiplicities of ideals. For more details see \cite{mvmm} and \cite{trungpositivity}. 

Let $S = k[x_1, \dots, x_n]$ be a polynomial ring over a field $k$, $\m = (x_1, \dots, x_n)$ the maximal homogeneous ideal and $J_1, \dots, J_s$ an arbitrary sequence of proper ideals of $S$. The following is an $\N^{s + 1}$-graded standard $k$-algebra:

$$
    R(I | J_1, \dots, J_s) := \bigoplus_{(u_0, \dots, u_s) \in \N^{s + 1}} I^{u_0} J_1^{u_1} \dots J_s^{u_s} / I^{u_0 + 1} J_1^{u_1} \dots J_s^{u_s}
$$
denoting the length of the graded components of $R(I | J_1, \dots, J_s)$ as  
$$
    \ell(I^{u_0} J_1^{u_1} \dots J_s^{u_s} / I^{u_0 + 1} J_1^{u_1} \dots J_s^{u_s})
$$
we get a numerical function that for large values of $u_0, \dots, u_s$ is a polynomial of the form:

$$
    P(u) = \sum_{\alpha \in \N^{s + 1}, |\alpha | = n - 1} \frac{1}{\alpha!} e_\alpha u^\alpha + \ \{\text{terms of total degree $< r$ }\},
$$
where $u^\alpha = u_0^{\alpha_0}\dots u_s^{\alpha_s}$, $|\alpha| = \alpha_0 + \dots + \alpha_s$ and $\alpha! = \alpha_0! \dots \alpha_s!$. The numbers $e_\alpha$ are called the \textbf{mixed multiplicities of $\m, J_1, \dots, J_s$} and will be denoted as $e_\alpha(\m | J_1, \dots, J_s) := e_\alpha$.

Given a sequence of rational polytopes $Q_1, \dots, Q_r$ in $\R^n$ with $\dim (Q_1 + \dots + Q_r) \leq r$ we define the \textbf{mixed volume} of $Q_1, \dots, Q_r$ as

$$
    MV_r(Q_1, \dots, Q_r) := \sum_{h = 1}^r \sum_{1 \leq i_1 \leq \dots \leq i_h \leq r} (-1)^{r - h} V_r(Q_{i_1} + \dots Q_{i_h}),
$$
where $V_r(Q)$ denotes the $r$-dimensional Euclidean volume.

In \cite{mvmm}, Trung and Verma showed that mixed multiplicities of equigenerated monomial ideals can be understood as mixed volumes of sequences of rational polytopes.

\begin{theorem}[\cite{mvmm}, Theorem 2.4]\label{mvmm}
    Let $J_1, \dots, J_s$ be equigenerated monomial ideals of $k[x_0, \dots, x_n]$ of degree $d_i$ and $Q_i$ denote the convex hull of exponents of the dehomogenized generators of $J_i$ in $k[x_1, \dots, x_n]$ and $Q_0$ the standard simplex in $\R^{n}$.
    Denote by $(i_0 Q_0, \dots, i_s Q_s)$ the sequence 
    
    $$
        (Q_0, \dots, Q_0, Q_1, \dots, Q_1, \dots, Q_s, \dots, Q_s)
    $$
    where each polytope $Q_j$ appears $i_j$ times and $i_0 + \dots + i_s = n$, then

    $$
        MV_n(i_0 Q_0, \dots, i_s Q_s) = e_{(i_0, \dots, i_s)}(\m | J_1, \dots, J_s).
    $$
\end{theorem}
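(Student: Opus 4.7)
The plan is to express both sides as leading coefficients of the same polynomial-growth lattice-point counting function, with the key step being an exact identification of the counting region with the Minkowski sum $u_0 Q_0 + u_1 Q_1 + \cdots + u_s Q_s$.

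First I would give a combinatorial description of the length function. Since $J := J_1^{u_1} \cdots J_s^{u_s}$ is a monomial ideal generated in degree $D := d_1 u_1 + \cdots + d_s u_s$, the quotient $\m^{u_0} J / \m^{u_0+1} J$ has as $k$-basis the monomials of $J$ of total degree exactly $u_0 + D$. Dehomogenizing with respect to $x_0$, I would verify that
\[
P(u) \;:=\; \ell\bigl(\m^{u_0} J / \m^{u_0+1} J\bigr) \;=\; \#\bigl(\,(u_0 Q_0 + u_1 Q_1 + \cdots + u_s Q_s) \cap \N^n\,\bigr).
\]
Indeed, the Newton polyhedron of $J$ in $\R^{n+1}$ is $u_1 \tilde P_1 + \cdots + u_s \tilde P_s + \R^{n+1}_{\geq 0}$, where $\tilde P_i$ is the convex hull of the exponents of the generators of $J_i$ and lies in the hyperplane $\{|x| = d_i\}$. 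A lattice point in this polyhedron with coordinate sum $u_0 + D$ decomposes as $y + r$ with $|y| = D$ (and $y \in u_1 \tilde P_1 + \cdots + u_s \tilde P_s$) and $|r| = u_0$ (and $r \in \R^{n+1}_{\geq 0}$); the $\R^n$-projections of $y$ and $r$ lie in $u_1 Q_1 + \cdots + u_s Q_s$ and in the scaled standard simplex $u_0 Q_0$, respectively, giving the claimed equality.

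Next I would invoke Ehrhart-type asymptotics for rational polytopes: the number of lattice points in the Minkowski sum $\sum_{j=0}^s u_j Q_j$ agrees with its $n$-dimensional Euclidean volume up to terms of total degree $< n$ in $u = (u_0, \ldots, u_s)$. Applying the defining multilinear expansion of mixed volume,
\[
\mathrm{Vol}_n\!\Bigl(\sum_{j=0}^{s} u_j Q_j\Bigr) \;=\; \sum_{|\alpha| = n} \frac{u^\alpha}{\alpha!}\, MV_n(\alpha_0 Q_0, \ldots, \alpha_s Q_s),
\]
and comparing with $P(u) = \sum_{|\alpha|=n} \tfrac{1}{\alpha!} e_\alpha u^\alpha + (\text{lower order})$ from the definition of mixed multiplicities, matching coefficients of $u^\alpha$ yields $e_{(\alpha_0,\dots,\alpha_s)}(\m \mid J_1, \ldots, J_s) = MV_n(\alpha_0 Q_0, \ldots, \alpha_s Q_s)$.

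The main obstacle is the polytope-identification step in the first paragraph: pinning down that the counting region is \emph{exactly} the Minkowski sum $u_0 Q_0 + u_1 Q_1 + \cdots + u_s Q_s$, rather than merely containing or being contained in it up to a lower-dimensional error. The content is the degree-matching argument sketched above — the fact that the generators of each $J_i$ share a common total degree $d_i$ forces the ``$\m$-slack" component of any degree-$(u_0+D)$ lattice point to have norm exactly $u_0$, which is what makes $u_0 Q_0$ (rather than some deformation of it) appear. Everything else is a formal consequence of Ehrhart asymptotics and the multilinearity of Euclidean volume.
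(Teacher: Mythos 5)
The paper itself gives no proof of this statement---it is quoted from Trung--Verma---so there is nothing internal to compare against; judged on its own, your sketch has the right architecture (slice the multigraded algebra, count lattice points, expand $\#\bigl((u_0Q_0+\cdots+u_sQ_s)\cap\Z^n\bigr)$ via McMullen/Ehrhart into mixed volumes), but the central identity you assert,
\[
\ell\bigl(\m^{u_0}J_1^{u_1}\cdots J_s^{u_s}/\m^{u_0+1}J_1^{u_1}\cdots J_s^{u_s}\bigr)\;=\;\#\bigl((u_0Q_0+u_1Q_1+\cdots+u_sQ_s)\cap\N^n\bigr),
\]
is false in general, and the obstacle is not the one you flag. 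The degree bookkeeping you worry about (the $\m$-slack having coordinate sum exactly $u_0$, so that $Q_0$ and not some deformation appears) is fine, as is the bijection between lattice points of the degree-$(u_0+D)$ slice of $u_1\tilde P_1+\cdots+u_s\tilde P_s+\R^{n+1}_{\ge 0}$ and lattice points of the projected Minkowski sum. The real problem is that a monomial $x^a$ lies in $J_1^{u_1}\cdots J_s^{u_s}$ only if $a=y+r$ with $y$ an honest sum of $u_i$ exponent vectors of \emph{generators} of each $J_i$ and $r\in\N^{n+1}$, whereas your count admits every lattice point of the polyhedron, i.e.\ every monomial of the \emph{integral closure} $\overline{J_1^{u_1}\cdots J_s^{u_s}}$. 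You are conflating the ideal with its integral closure. Already for $s=1$, $u_0=0$ the left side is $\mu(J_1^{u_1})$, the number of distinct products of $u_1$ generators, while the right side is the Ehrhart polynomial of $Q_1$ at $u_1$; for $J_1=(x_0^4,x_0^3x_1,x_0x_1^3,x_1^4)$ these are $4$ and $5$ at $u_1=1$, and for ideals whose exponent polytope is a Reeve-type simplex the discrepancy persists for arbitrarily large $u_1$. So the two sides satisfy only $\le$, not $=$.

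What the theorem actually requires---and what constitutes the substance of the Trung--Verma proof---is that this discrepancy contributes only terms of total degree $<n$, so that the leading forms agree. The standard route is to pass to the normalization $\overline{\mathcal R}$ of the multi-Rees algebra $\mathcal R=\bigoplus \m^{u_0}J_1^{u_1}\cdots J_s^{u_s}$: the graded pieces of $\overline{\mathcal R}$ are counted exactly by your lattice-point formula, $\overline{\mathcal R}$ is a finite $\mathcal R$-module, and $\overline{\mathcal R}/\mathcal R$ has strictly smaller dimension and hence a Hilbert polynomial of strictly smaller total degree. Without an argument of this kind your proof yields only an inequality between mixed multiplicities and mixed volumes. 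The remaining steps (Nakayama to identify the length with a monomial count in a single degree, polynomiality of the lattice-point function of a Minkowski sum of lattice polytopes, the mixed-volume expansion of its leading form, and the coefficient comparison) are correct as stated.
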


In the particular case of \Cref{mvmm} where the index of the mixed multiplicity $e_\alpha(\m | I_1, \dots, I_s)$ is a vector with only one nonzero entry. Then the mixed volume computation reduces to a volume computation.

\begin{lemma}\label{mixedvoltovol}
    Let $Q$ be a polytope in $\R^n$. Then
    
    $$
    MV_n(\underbrace{Q, \dots, Q}_{n \text{ times}}) = n! V_n(Q)
    $$
\end{lemma}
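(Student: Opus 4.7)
The plan is to unfold the inclusion--exclusion definition of $MV_n$ directly, exploit the homogeneity of Euclidean volume, and reduce to a classical sum-of-powers identity.

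First, I would substitute $Q_i = Q$ for every $i$ in the definition of $MV_n$. Each inner index set specifies $h$ distinct indices $1 \le i_1 < \dots < i_h \le n$ (this is the reading of the chain $\leq$ that is forced by compatibility with the mixed-multiplicity normalization of \Cref{mvmm}; any other reading would not recover a standard mixed-volume identity). Since all arguments are equal, $Q_{i_1} + \dots + Q_{i_h} = hQ$ independently of the chosen subset, so the inner sum collapses to $\binom{n}{h}$ copies of a single volume term, yielding
$$
MV_n(Q, \dots, Q) = \sum_{h=1}^{n} (-1)^{n-h} \binom{n}{h} V_n(hQ).
$$

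Second, by positive homogeneity of the $n$-dimensional Lebesgue measure, $V_n(hQ) = h^n V_n(Q)$, so the identity reduces to
$$
\sum_{h=1}^{n} (-1)^{n-h} \binom{n}{h} h^n = n!.
$$
Since the $h=0$ summand vanishes, this is equivalent to $\sum_{h=0}^{n} (-1)^{n-h}\binom{n}{h} h^n = n!$, which is the $k=n$ case of the classical formula $\sum_{h=0}^{k}(-1)^{k-h}\binom{k}{h} h^n = k!\, S(n,k)$ using $S(n,n) = 1$; equivalently, inclusion--exclusion on the image counts exactly the $n!$ bijections of an $n$-element set. Multiplying through by $V_n(Q)$ gives the claim.

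No serious obstacle is anticipated: the lemma is pure combinatorial bookkeeping once the definition is expanded and the homogeneity of volume is applied. The only subtle point is the reading of the non-strict chain of inequalities in the definition of $MV_n$; one could also avoid this subtlety by invoking the polynomial expansion $V_n(t_1 Q + \dots + t_n Q) = (t_1 + \dots + t_n)^n V_n(Q)$ and matching the coefficient of $t_1 \cdots t_n$ on both sides.
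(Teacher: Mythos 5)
Your proof is correct. The paper states this lemma without proof (it is the classical diagonal case of the inclusion--exclusion definition of mixed volume), so there is no argument to compare against; your computation --- collapsing the inner sum to $\binom{n}{h}V_n(hQ)$, using homogeneity $V_n(hQ)=h^nV_n(Q)$, and invoking $\sum_{h=0}^{n}(-1)^{n-h}\binom{n}{h}h^n = n!\,S(n,n) = n!$ --- is the standard verification and is complete. You are also right to flag that the chain of non-strict inequalities in the paper's displayed definition of $MV_r$ must be read as indexing over subsets (distinct indices) for the normalization to match \Cref{mvmm}; your alternative derivation via the coefficient of $t_1\cdots t_n$ in $V_n(t_1Q+\dots+t_nQ)=(t_1+\dots+t_n)^nV_n(Q)$ sidesteps that ambiguity cleanly.
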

 
If we further assume that $Q$ is a polytope in $\R^n$ with exactly $n$ vertices, the volume computation reduces to the computation of a determinant:

\begin{lemma}\label{detprojection}
    Let $P \subset \R^{n + 1}$ be the convex hull of the points 
    $$
        x_0 = (x_{0,0}, \dots, x_{0, n}), \dots, x_n = (x_{n, 0}, \dots, x_{n, n}).
    $$
    and assume $|x_0| = \dots = |x_n| = d$. Then 

    $$
        n!V_n(Q) = \det 
        \begin{pmatrix}
            1 & \dots &  x_{0, n} \\
            \vdots & \ddots & \vdots \\
            1 & \dots & x_{n, n}    
        \end{pmatrix} = \frac{1}{d} \det 
        \begin{pmatrix}
         x_{0, 0} & \dots &  x_{0, n} \\
         \vdots & \ddots & \vdots \\
         x_{n, 0} & \dots & x_{n, n}    
        \end{pmatrix}
    $$
    where $Q$ is the projection of $P$ to the hyperplane $z_0 = 0$.
\end{lemma}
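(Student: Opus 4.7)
The plan is to reduce the lemma to two elementary matrix manipulations. The hypothesis $|x_0| = \cdots = |x_n| = d$ forces every vertex of $P$ to lie on the affine hyperplane $\sum_{j=0}^{n} z_j = d$, so the projection $(z_0, \ldots, z_n) \mapsto (z_1, \ldots, z_n)$ carries the vertices of $P$ bijectively onto $\overline{x_i} := (x_{i,1}, \ldots, x_{i,n})$, which are by construction the vertices of $Q$. In particular $Q$ is a (possibly degenerate) $n$-simplex in $\R^n$, and the entire statement reduces to the two matrix identities
$$n!\, V_n(Q) = \det M_1 \qquad \text{and} \qquad \det M_1 = \tfrac{1}{d} \det M_2,$$
where $M_1$ is the $(n+1) \times (n+1)$ matrix with $i$-th row $(1, x_{i,1}, \ldots, x_{i,n})$ and $M_2$ is the $(n+1) \times (n+1)$ matrix with $i$-th row $(x_{i,0}, x_{i,1}, \ldots, x_{i,n})$.

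The first identity is the classical determinant formula for the volume of a simplex. Subtracting the top row from each of the others in $M_1$ turns its first column into $(1, 0, \ldots, 0)^T$ while producing an $n \times n$ lower-right block whose rows are $\overline{x_i} - \overline{x_0}$ for $i = 1, \ldots, n$. Expanding along the first column and applying the parallelepiped volume formula yields $n!\, V_n(Q) = \det M_1$, interpreted up to a sign fixed by the vertex ordering.

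The second identity follows from a single column operation on $M_2$. Replacing its first column by the sum of all $n+1$ columns changes that column to $(|x_0|, \ldots, |x_n|)^T = (d, \ldots, d)^T$ while leaving every other column untouched; factoring $d$ out of this column produces exactly $M_1$, giving $\det M_2 = d \cdot \det M_1$. There is no genuine obstacle here — the only mild subtlety is keeping signs consistent across the two equalities, which one handles by fixing an orientation of the vertex set once and for all and checking that the column operations preserve it.
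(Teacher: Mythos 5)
Your proof is correct. The paper states this lemma without proof, treating it as a standard fact, so there is no argument of the author's to compare against; your two steps --- the row-reduction form of the classical simplex volume determinant for the first equality, and the column operation that replaces the first column of the full exponent matrix by the sum of all columns (which equals $(d,\dots,d)^{T}$ by the hypothesis $|x_0|=\dots=|x_n|=d$) for the second --- are exactly the standard justification one would supply. The only caveat, which you already flag, is that both equalities hold only up to sign, consistent with the paper's implicit reading of these determinants as unsigned volumes.
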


\begin{remark}
    Note that given a set of monomials $m_1, \dots, m_s \in k[x_1, \dots, x_s]$ that define a rational map $\varphi: \Pp^{s - 1} \dasharrow \Pp^{s - 1}$, the map $\varphi$ is birational if and only if the last mixed multiplicity of $\m = (x_1, \dots, x_s)$ and $I = (m_1, \dots, m_s)$ is $1$. 
\end{remark}

From these two observations we can determine for which characteristics the WLP fails for a simplicial complex that has the WLP in characteristic $0$.

\begin{theorem}[\textbf{WLP in positive characteristics and mixed multiplicities}]\label{failurewlp}
    Let $\Delta$ be a simplicial complex such that $A(\Delta)$ has the WLP in characteristic zero and degree $i$. Assume moreover that $f_{i - 1} \leq f_{i}$. Let $m_1, \dots, m_{f_{i}}$ be the generators of the $i$-th incidence ideal of $\Delta$, $I_\Delta(i)$. Denote by $I_S$ the ideal generated by a subset $S$ of the generators of $I_\Delta(i)$. Then $\Delta$ has the WLP in degree $i$ and characteristic $p$ if and only if $p$ does not divide

    $$
    (i + 1) \gcd\Big{(}e_{(0, f_{i - 1} - 1)}(\m_{\Delta, i} | I_S) \ : \ S \subset \{m_1, \dots, m_{f_{i}}\}, \ |S| = f_{i - 1}\Big{)}
    $$
\end{theorem}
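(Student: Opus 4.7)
The plan is to reduce the WLP in degree $i$ and characteristic $p$ to a divisibility statement on the integer maximal minors of the incidence matrix $M(\Delta, i)$, and then to identify each such minor with a mixed multiplicity by chaining \Cref{mvmm}, \Cref{mixedvoltovol} and \Cref{detprojection}.

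First I would establish that $A(\Delta)$ has the WLP in degree $i$ and characteristic $p$ if and only if $p$ does not divide
\[
d \;:=\; \gcd_{|S| = f_{i-1}}\,|\det M_S|,
\]
where $S$ ranges over subsets of the $f_i$ facets of dimension $i$ and $M_S$ is the associated $f_{i-1} \times f_{i-1}$ submatrix of $M(\Delta, i)$. This is immediate from \cite[Proposition 2.2]{wlpmon} together with \Cref{matrixrepresentative}: the multiplication map $\times L \colon A_i \to A_{i+1}$ is represented by the integer matrix $M(\Delta, i)$ reduced modulo $p$, and since $f_{i-1} \leq f_i$, full rank means that some $f_{i-1} \times f_{i-1}$ minor is nonzero modulo $p$. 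The hypothesis that $A(\Delta)$ has the WLP in characteristic zero guarantees $d > 0$, so the condition is nontrivial.

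The main computational step is to prove, for each admissible $S$, the identity
\[
|\det M_S| \;=\; (i+1)\, e_{(0, f_{i - 1} - 1)}(\m_{\Delta, i} \mid I_S).
\]
The ideal $I_S \subset R_{\Delta, i}$ is equigenerated in degree $i+1$ with $f_{i - 1}$ generators whose exponent vectors are precisely the rows of $M_S$. Setting $n := f_{i - 1} - 1$ and applying \Cref{mvmm} with $s = 1$ and index vector $(i_0, i_1) = (0, n)$ gives
\[
e_{(0, n)}(\m_{\Delta, i} \mid I_S) \;=\; MV_n(\underbrace{Q_1, \ldots, Q_1}_{n \text{ times}}),
\]
where $Q_1$ is the convex hull of the dehomogenized exponents of the generators of $I_S$. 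By \Cref{mixedvoltovol} this mixed volume equals $n!\,V_n(Q_1)$, and by \Cref{detprojection} we obtain $n!\,V_n(Q_1) = \tfrac{1}{i+1}|\det M_S|$. Taking the gcd over all admissible $S$ and combining with the reduction from the previous paragraph yields the theorem.

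The main subtlety is that \Cref{detprojection} is naturally phrased for the generic case of $n+1$ affinely independent points. When the rows of $M_S$ are affinely dependent we have $V_n(Q_1) = 0$, but since every row of $M_S$ has coordinate sum $i+1 \neq 0$, linear and affine dependence of the rows of $M_S$ coincide, so $\det M_S$ vanishes as well and the identity holds trivially. The factor $(i+1)$ in the statement of the theorem is precisely the normalization $\tfrac{1}{d}$ appearing in \Cref{detprojection} when passing from homogeneous exponent vectors to dehomogenized ones.
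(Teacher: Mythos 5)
Your proposal is correct and follows essentially the same route as the paper: reduce the WLP in characteristic $p$ to the nonvanishing modulo $p$ of some maximal minor of $M(\Delta,i)$, then identify $(i+1)\,e_{(0,f_{i-1}-1)}(\m_{\Delta,i}\mid I_S)$ with $\det\log(S)$ via \Cref{mvmm}, \Cref{mixedvoltovol} and \Cref{detprojection}. Your explicit treatment of the affinely dependent (zero-minor) case and of the normalization factor $i+1$ is a slightly more careful write-up of the same argument.
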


\begin{proof}
    We know $A(\Delta)$ has the WLP in degree $i$ when the base field has characteristic zero, so we know at least one $f_{i - 1} \times f_{i - 1}$ minor of $M(\Delta, i)$ is nonzero.

    By \Cref{mvmm}, we know $e_{(0, f_{i - 1} - 1)}(\m_{\Delta, i} | I_S) = MV_{f_{i - 1} - 1}(Q, \dots, Q)$ where $Q$ is the simplex such that the vertices of $Q$ are the projections of the exponents of the monomials in $S$. \Cref{mixedvoltovol} implies 
    
    $$
    e_{(0, f_{i - 1} - 1)}(\m_{\Delta, i} | I_S) = (f_{i - 1} - 1)! V_{f_{i - 1} - 1}(Q).
    $$
    
    By \Cref{detprojection} and since the generators of $I_S$ all have the same degree $i + 1$, we conclude

    $$
        (i + 1) e_{(0, f_{i - 1} - 1)}(\m_{\Delta, i} | I_S) = \det \log(S).
    $$

    Since $\det \log(S)$ is a maximal minor of $M(\Delta, i)$, we see that $M(\Delta, i)$ has a maximal nonzero minor modulo $p$ if and only if there exists an $S$ such that $p$ does not divide $(i + 1) e_{(0, f_{i - 1} - 1)}(\m_{\Delta, i} | I_S)$, which is equivalent to saying $p$ does not divide the gcd of $(i + 1) e_{(0, f_{i - 1} - 1)}(\m_{\Delta, i} | I_S)$ where $S$ ranges over every subset of $\{m_1, \dots, m_{f_i}\}$ of size $f_{i - 1}$.
\end{proof}

\begin{example}\label{example7}
    In \Cref{example4} we verified that the $A(\Delta)$ has the WLP in characteristic zero and all degrees where $\Delta$ is the simplicial complex from \Cref{example1}. The maximal minors of $M(\Delta, 1)$ are in bijection with subgraphs of the $1$-skeleton of $\Delta$ that have $4$ edges. For each minor there are only two possible cases:

    \begin{enumerate}
        \item Either the minor is the determinant of the incidence matrix of a $4$-cycle, in which case it is zero
        \item or it is the determinant of the incidence matrix of a triangle with an extra edge. In this case the underlying graph satisfies the hypothesis of \Cref{simisvillarreal} and defines a birational map. Then by \Cref{DPB} we conclude that it is $\pm 2$. 
    \end{enumerate}
    
    In particular, the gcd of all the minors is $2$, so $A(\Delta)$ has the WLP in degree $1$ in every characteristic $\neq 2$.
\end{example}

From \Cref{failurewlp} we can give a lower bound so that if $A(\Delta)$ has the WLP in degree $i$ and characteristic zero, then $A(\Delta)$ has the WLP in degree $i$ and characteristic $p$ for every $p$ above the lower bound.
 
\begin{lemma}[\cite{huh}, Lemma 32]\label{ineqmm}
    Let $I, J \subset k[x_1, \dots, x_n]$ be monomial ideals generated in the same degree such that $J \subset I$. Then 

    $$
        e_{\alpha}(\m | J) \leq e_{\alpha}(\m | I)
    $$
    for every $\alpha \in \N^2$ such that $|\alpha| = n - 1$, where $\m = (x_1, \dots, x_n)$.
\end{lemma}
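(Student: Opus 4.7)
The plan is to route through the Trung--Verma correspondence (\Cref{mvmm}) and then apply monotonicity of mixed volumes. The key observation is that since $J \subset I$ and both ideals are generated by monomials of the same degree $d$, every minimal monomial generator of $J$ is a degree-$d$ monomial lying in $I$, and therefore equals one of the minimal generators of $I$. Hence the minimal generating set of $J$ is a subset of that of $I$, and the Newton polytope $Q_J$ (the convex hull of the dehomogenized exponent vectors of the generators of $J$) is contained in $Q_I$, since it is the convex hull of a subset of the points defining $Q_I$ on the same affine hyperplane.

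Next, writing $\alpha = (\alpha_0, \alpha_1)$ and applying \Cref{mvmm} with $s = 1$, one obtains
$$
    e_\alpha(\m | J) = MV_n(\alpha_0\, Q_0,\, \alpha_1\, Q_J) \qand e_\alpha(\m | I) = MV_n(\alpha_0\, Q_0,\, \alpha_1\, Q_I),
$$
where $Q_0$ is the standard simplex. The lemma therefore reduces to the inequality
$$
    MV_n(\alpha_0\, Q_0,\, \alpha_1\, Q_J) \leq MV_n(\alpha_0\, Q_0,\, \alpha_1\, Q_I),
$$
which is an instance of the monotonicity of mixed volumes: for convex bodies $K \subset K'$ and any convex bodies $L_1, \dots, L_{n-1}$, one has $MV(K, L_1, \dots, L_{n-1}) \leq MV(K', L_1, \dots, L_{n-1})$. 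Applied one slot at a time, swapping each of the $\alpha_1$ copies of $Q_J$ for $Q_I$ yields the desired inequality.

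The main obstacle is justifying the one-slot monotonicity cleanly in this setup. My approach would be to expand the Euclidean volume $V_n(s_0\, Q_0 + s_1\, K)$ as a homogeneous polynomial of degree $n$ in nonnegative variables $s_0, s_1$, whose coefficients (up to binomial factors) are the mixed volumes $MV_n(\alpha_0\, Q_0,\, \alpha_1\, K)$. Replacing $K = Q_J$ by the larger body $K = Q_I$ enlarges the Minkowski sum $s_0\, Q_0 + s_1\, K$ pointwise, hence increases its Euclidean volume for all $s_0, s_1 \geq 0$; since every mixed-volume coefficient is nonnegative, comparing coefficients of $s_0^{\alpha_0} s_1^{\alpha_1}$ extracts the coefficient-wise inequality. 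This monotonicity is standard in convex geometry, and the precise statement we need matches the argument given in \cite{huh}, Lemma 32.
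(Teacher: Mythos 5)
The paper itself gives no argument for this lemma; it is quoted directly from \cite{huh} (Lemma 32), so there is no in-paper proof to compare against. Your overall strategy is sound and is a reasonable reconstruction: since $J\subset I$ and both are generated in degree $d$, every degree-$d$ monomial of $J$ is a monomial multiple of a degree-$d$ generator of $I$ and hence equal to one, so the generating set of $J$ is a subset of that of $I$, giving $Q_J\subseteq Q_I$; then \Cref{mvmm} converts the claim into monotonicity of mixed volumes in one slot. (Minor bookkeeping: with the lemma's $n$ variables the relevant mixed volume is $MV_{n-1}$, not $MV_n$; the paper's \Cref{mvmm} is stated for $n+1$ variables.)

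The genuine gap is your justification of the one-slot monotonicity. You argue that $V_n(s_0Q_0+s_1Q_J)\leq V_n(s_0Q_0+s_1Q_I)$ for all $s_0,s_1\geq 0$, and that since all mixed-volume coefficients are nonnegative one may ``compare coefficients.'' That inference is invalid: pointwise domination on the nonnegative orthant of homogeneous polynomials with nonnegative coefficients does not imply coefficientwise domination. For instance $s_0s_1\leq s_0^2+s_1^2$ for all $s_0,s_1\geq 0$, yet the coefficient of $s_0s_1$ drops from $1$ to $0$. Nonnegativity of the coefficients of the difference polynomial is exactly the monotonicity you are trying to prove, so the step is circular. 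The repair is simply to invoke the standard fact that the mixed volume $MV(K_1,\dots,K_r)$ is monotone in each argument for convex bodies (proved, e.g., via the representation $MV(K_1,K_2,\dots,K_r)=\int h_{K_1}\,dS(K_2,\dots,K_r;\cdot)$ with $S$ a nonnegative measure and $h_K\leq h_{K'}$ when $K\subseteq K'$; see Schneider's book on the Brunn--Minkowski theory), or to cite \cite{huh} outright as the paper does. With that substitution your argument is complete.
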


\begin{corollary}\label{failurewlpbound}
    Assume $A(\Delta)$ has the WLP in degree $i$ and characteristic zero. Assume also that $f_{i - 1} \leq f_{i}$. Then $A(\Delta)$ has the WLP in degree $i$ and characteristic $p$ for every $p$ such that
    $$
        p > e_{(0, f_{i - 1} - 1)}(\m_{\Delta, i} | I_\Delta(i)) \text{ and } p \text{ does not divide } i + 1.
    $$
\end{corollary}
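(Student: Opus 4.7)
The plan is to deduce the corollary directly from \Cref{failurewlp}, using the monotonicity of mixed multiplicities from \Cref{ineqmm} to replace each subset-indexed mixed multiplicity by the single global mixed multiplicity of $I_\Delta(i)$. \Cref{failurewlp} reduces the claim to showing that, for each prime $p$ satisfying the hypotheses, there is some subset $S \subset \{m_1, \dots, m_{f_i}\}$ of size $f_{i-1}$ such that $p$ does not divide $(i+1)\, e_{(0, f_{i-1}-1)}(\m_{\Delta, i} | I_S)$.

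First I would use the characteristic-zero WLP assumption, together with \Cref{wlpspread} and \Cref{matrixrepresentative}, to produce at least one subset $S_0$ of size $f_{i-1}$ whose corresponding maximal minor of $M(\Delta, i)$ is nonzero: WLP in degree $i$ and characteristic zero forces $M(\Delta, i)$ to have full rank $f_{i-1}$, so at least one maximal minor must be nonzero. Tracing through the identifications already invoked in the proof of \Cref{failurewlp} (namely \Cref{mvmm}, \Cref{mixedvoltovol} and \Cref{detprojection}), this nonzero minor equals $(i+1)\, e_{(0, f_{i-1}-1)}(\m_{\Delta, i} | I_{S_0})$, so in particular $e_{(0, f_{i-1}-1)}(\m_{\Delta, i} | I_{S_0})$ is a strictly positive integer.

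Next I would apply \Cref{ineqmm} to the inclusion $I_{S_0} \subset I_\Delta(i)$ of equigenerated ideals of degree $i+1$, obtaining
$$
0 < e_{(0, f_{i-1}-1)}(\m_{\Delta, i} | I_{S_0}) \leq e_{(0, f_{i-1}-1)}(\m_{\Delta, i} | I_\Delta(i)) < p.
$$
A prime strictly larger than a positive integer cannot divide it, so $p \nmid e_{(0, f_{i-1}-1)}(\m_{\Delta, i} | I_{S_0})$. Combined with the hypothesis $p \nmid i+1$, this gives $p \nmid (i+1)\, e_{(0, f_{i-1}-1)}(\m_{\Delta, i} | I_{S_0})$, and \Cref{failurewlp} then finishes the argument.

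There is no serious obstacle here — essentially all of the work is already contained in \Cref{failurewlp} and \Cref{ineqmm}. The only care required is bookkeeping around the factor $i+1$ (which is why the hypothesis on $p$ splits into two independent clauses) and verifying positivity of at least one subset-term in the gcd before comparing it against $p$; both are immediate from the characteristic-zero WLP hypothesis.
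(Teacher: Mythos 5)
Your proposal is correct and follows essentially the same route as the paper: bound every subset-indexed mixed multiplicity by $e_{(0, f_{i-1}-1)}(\m_{\Delta, i} \mid I_\Delta(i)) < p$ via \Cref{ineqmm}, then conclude with \Cref{failurewlp}. You are in fact slightly more careful than the paper's own write-up, which glosses over the need to exhibit at least one subset $S_0$ with a strictly positive (hence nonzero, hence not divisible by $p$) term before reasoning about the gcd; your explicit use of the characteristic-zero WLP hypothesis to produce such an $S_0$ closes that small gap.
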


\begin{proof}
    By \Cref{ineqmm}, since any ideal generated by a subset of the generators of $I_\Delta(i)$ is contained in $I_\Delta(i)$, we have:

    $$
        e_{(0, f_{i - 1} - 1)}(\m_{\Delta, i} | I_S) \leq e_{(0, f_{i - 1} - 1)}(\m_{\Delta, i} | I_\Delta(i)) < p
    $$
    for every subset $S$ of generators of $I_\Delta(i)$. The result holds by \Cref{failurewlpbound} since $p$ does not divide $(i + 1)e_{(0, f_{i - 1} - 1)}(\m_{\Delta, i} | I_S)$ for any subset $S$.
\end{proof}

So far we assumed $A(\Delta)$ has the WLP in characteristic zero and used mixed multiplicities to study the failure of the WLP in positive characteristics. Our next goal is to understand the failure of the WLP in characteristic zero in terms of mixed multiplicities.

One of the main questions in the theory of mixed multiplicities of ideals was on their positivity, that is, when can we guarantee $e_{\alpha}(\m | J) > 0$ for some ideal $J$ in a standard $\N$-graded $k$-algebra? In \cite{trungpositivity}, Trung answered the question by relating the positivity of mixed multiplicities of an ideal $J$ and the analytic spread of $J$. The result was recently generalized to sequences of ideals by F. Castillo, Y. Cid-Ruiz, B. Li, J. Montaño and N. Zhang in \cite{sequencepositivity}.
 
\begin{theorem}[\cite{sequencepositivity}, Theorem 4.4]\label{sequencepositivity}
    Let $J_1, \dots, J_s$ be an arbitrary sequence of ideals of $k[x_1, \dots, x_n]$, $\m$ the maximal homogeneous ideal and $\alpha \in \N^{s + 1}$ such that $|\alpha| = n - 1$. Then $e_{\alpha}(\m | J_1, \dots, J_s) > 0$ if and only if for every subset $\{i_1, \dots, i_m\} \subset \{1, \dots, s\}$ the following inequality holds:

    $$
    \alpha_{i_1} + \dots + \alpha_{i_m} \leq \ell(J_{i_1}\dots J_{i_m}) - 1.
    $$
\end{theorem}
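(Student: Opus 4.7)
The plan is to prove the equivalence in two directions, using the definition of mixed multiplicities via the multigraded Hilbert polynomial $P(u)$ of $R(\m | J_1, \ldots, J_s)$ together with reduction by superficial elements. The necessity direction will be a dimension-theoretic argument, while the sufficiency direction will proceed by induction on $|\alpha| = n - 1$, successively reducing one coordinate of $\alpha$ at a time.

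\textbf{Necessity.} For any fixed subset $S = \{i_1, \ldots, i_m\} \subset \{1, \ldots, s\}$, I would specialize $P(u)$ by setting $u_j = 0$ for all $j \notin S \cup \{0\}$. This recovers (up to standard normalization) the Hilbert polynomial of the multigraded algebra associated to $\m$ together with only the subsequence $J_{i_1}, \ldots, J_{i_m}$. Its total degree in the variables $u_{i_1}, \ldots, u_{i_m}$ is bounded above by the Krull dimension of the fiber ring $\F(J_{i_1}\cdots J_{i_m})$, which by definition equals $\ell(J_S)$ where $J_S := J_{i_1}\cdots J_{i_m}$. Consequently any monomial $u^\alpha$ with $\alpha_{i_1} + \cdots + \alpha_{i_m} > \ell(J_S) - 1$ must have coefficient zero in $P(u)$, so positivity of $e_\alpha$ forces the stated inequality.

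\textbf{Sufficiency.} I would induct on $|\alpha|$; the base case is immediate. For the inductive step, I would pick an index $i \in \{0, 1, \ldots, s\}$ with $\alpha_i \geq 1$ and a homogeneous element $z$ of the corresponding ideal that is simultaneously superficial for \emph{every} product $J_S$ occurring in the hypotheses. Since $k$ is infinite, such a $z$ exists in the common intersection of a finite collection of dense Zariski opens. Standard superficial-element theory then yields
$$
e_\alpha(\m | J_1, \ldots, J_s) = e_{\alpha'}(\bar\m | \bar J_1, \ldots, \bar J_s),
$$
where $\alpha'$ differs from $\alpha$ only by decreasing $\alpha_i$ by one, while $\ell(\bar J_S) = \ell(J_S) - 1$ when $z \in J_S$ and dimension drops by one as well. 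A case check confirms the hypothesis inequalities persist for $\alpha'$ and the reduced ideals, so the inductive hypothesis delivers positivity.

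\textbf{Main obstacle.} The technical crux will be the choice of the index $i$ and the superficial element $z$: one must simultaneously control the analytic spread of all $2^s$ products $J_S$ after reduction, while preserving the positivity conditions needed for the inductive hypothesis. When a tight inequality $\alpha_{i_1} + \cdots + \alpha_{i_m} = \ell(J_S) - 1$ is attained, the choice of $i$ may be forced; verifying that some admissible $i$ always exists—so the reduction procedure never terminates prematurely—will be the subtle combinatorial step, with Trung's single-ideal argument serving as a model.
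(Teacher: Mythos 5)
First, a structural point: the paper does not prove this statement---it is imported verbatim as Theorem 4.4 of the cited work of Castillo, Cid-Ruiz, Li, Monta\~{n}o, and Zhang, so there is no internal proof to compare against. Judged on its own merits, your sketch follows the broad outline of the known arguments (degree bounds on the multigraded Hilbert polynomial for necessity, reduction by general elements for sufficiency), but both halves have genuine gaps. For necessity, specializing $P(u)$ at $u_j = 0$ for $j \notin S \cup \{0\}$ does not bound the partial degree of $P$ in the variables $u_{i_1}, \dots, u_{i_m}$: any monomial $u^\alpha$ with $\alpha_j > 0$ for some $j \notin S$ is annihilated by the specialization, and it is precisely for such $\alpha$ that the inequality $\alpha_{i_1} + \dots + \alpha_{i_m} \leq \ell(J_{i_1} \cdots J_{i_m}) - 1$ still needs to be verified. (Compare $P = u_1^2 u_2$ with $S = \{1\}$: setting $u_2 = 0$ yields the zero polynomial and says nothing about the degree in $u_1$.) What is actually required is a bound on the partial degree of the full polynomial in the block of variables indexed by $S$, which comes from a dimension computation on the multigraded Rees/fiber algebra rather than from a specialization.

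For sufficiency, the inductive step you describe is where the entire difficulty of the theorem lives, and your sketch defers it rather than resolving it. Concretely: (i) a general element $z$ of $J_i$ need not lie in the product $J_S$ for $|S| \geq 2$, and passing to $R/(z)$ can also change $\ell(J_S)$ for subsets $S$ \emph{not} containing $i$, so the claim that ``the hypothesis inequalities persist'' is not a routine case check; (ii) when several of the inequalities are tight, the admissible choices of the index $i$ can conflict, and showing that a valid $i$ always exists is exactly the combinatorial heart of the result---in the cited proof this rests on the submodularity of the set function $S \mapsto \ell(J_S)$ (equivalently, of the dimensions of images of projections of the associated multiprojective scheme), which makes the exponents $\alpha$ satisfying all the inequalities the bases of a discrete polymatroid. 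Without that structural input, or some substitute for it, the induction cannot be closed, so as written the proposal is an outline of the right strategy rather than a proof.
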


When $s = 1$, there is the result by Trung in \cite{trungpositivity}:
 
\begin{theorem}[\cite{sequencepositivity}, Corollary 3.7]\label{positivityone}
    Let $J$ be an arbitrary ideal of $k[x_1, \dots, x_n]$ and $\m = (x_1, \dots, x_n)$. Then $e_{(n - 1 - i, i)}(\m | J) > 0$ if and only if $0 \leq i \leq \ell(J) - 1$. 
\end{theorem}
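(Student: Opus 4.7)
The plan is to obtain the statement as an immediate specialization of \Cref{sequencepositivity} to the case $s = 1$. Setting $J_1 = J$, the multi-indices $\alpha \in \N^{s+1} = \N^2$ with $|\alpha| = n - 1$ are exactly those of the form $(n - 1 - i, i)$ for some integer $i \in \{0, 1, \dots, n-1\}$, so the mixed multiplicity in question already matches the format in \Cref{sequencepositivity}.

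When $s = 1$, the only nonempty subset of $\{1, \dots, s\}$ is $\{1\}$ itself, so the family of inequalities in \Cref{sequencepositivity} collapses to the single condition
$$
    \alpha_1 \leq \ell(J_1) - 1, \qquad \text{i.e.\ } i \leq \ell(J) - 1.
$$
Combined with the automatic bound $i \geq 0$ coming from $\alpha \in \N^2$, this yields the claimed equivalence $e_{(n-1-i, i)}(\m | J) > 0 \iff 0 \leq i \leq \ell(J) - 1$.

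Since the argument is a direct specialization, there is no substantial obstacle; the only thing worth pausing on is the indexing convention. One must confirm that in \Cref{sequencepositivity} the coordinate $\alpha_1$ indeed records the exponent of $J_1$ rather than of $\m$, so that the resulting inequality reads $i \leq \ell(J) - 1$ and not $n - 1 - i \leq \ell(J) - 1$. With the convention from the definition of $e_\alpha(\m | J_1, \dots, J_s)$ in the preamble to \Cref{mvmm} fixed (where $u^\alpha = u_0^{\alpha_0} u_1^{\alpha_1} \cdots u_s^{\alpha_s}$ with $u_0$ attached to $\m$), the proof is a one-line invocation of the $s = 1$ case of the sequence positivity theorem quoted just above.
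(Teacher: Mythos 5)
Your derivation is correct and matches how the paper treats this statement: the result is quoted from the literature without proof, presented immediately after \Cref{sequencepositivity} as its $s = 1$ specialization, which is precisely the reduction you carry out. The one genuinely delicate point, the indexing convention ensuring the inequality reads $i \leq \ell(J) - 1$ rather than $n - 1 - i \leq \ell(J) - 1$, is handled correctly by your appeal to the definition of $e_\alpha(\m \mid J_1, \dots, J_s)$ preceding \Cref{mvmm}.
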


As a consequence, we have the following characterization of the WLP in characteristic zero in terms of mixed multiplicities.

\begin{theorem}[\textbf{WLP in characteristic zero and mixed multiplicities}]\label{wlpmm}
    The squarefree reduction $A(\Delta)$ of the Stanley-Reisner ideal of a simplicial complex $\Delta$ has the WLP in characteristic zero and degree $i$ if and only if one of the following two conditions hold:
    
    \begin{enumerate}
        \item $f_{i - 1} \leq f_{i}$ and $e_{(0, f_{i - 1} - 1)}(\m_{\Delta, i} | I_\Delta(i)) > 0$
        \item $f_i \leq f_{i - 1}$ and $e_{(f_{i - 1} - f_{i}, f_i - 1)}(\m_{\Delta, i} | I_\Delta(i)) > 0$
    \end{enumerate}
\end{theorem}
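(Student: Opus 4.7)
The plan is to combine \Cref{wlpspread}, which recasts the WLP in degree $i$ as an analytic-spread statement, with the positivity criterion of \Cref{positivityone}, which in turn recasts analytic spread in terms of mixed multiplicities. The ambient ring $R_{\Delta, i}$ has exactly $f_{i-1}$ variables, one for each $(i-1)$-face, and by \Cref{basicpropsincidence} the ideal $I_\Delta(i)$ is minimally generated by $f_i$ squarefree monomials of degree $i+1$. Since $\F(I_\Delta(i))$ is the $k$-subalgebra of $R_{\Delta, i}$ generated by these $f_i$ monomials, the analytic spread obeys the a priori bound
$$
    \ell(I_\Delta(i)) \leq \min(f_{i-1}, f_i).
$$
By \Cref{wlpspread}, WLP in degree $i$ and characteristic zero is precisely equality in this inequality.

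The next step is to turn this equality into a positivity statement. Applying \Cref{positivityone} inside $R_{\Delta, i}$ with $n = f_{i-1}$ and $J = I_\Delta(i)$ gives
$$
    e_{(f_{i-1} - 1 - j,\, j)}(\m_{\Delta, i} \mid I_\Delta(i)) > 0 \iff 0 \leq j \leq \ell(I_\Delta(i)) - 1.
$$
In case (1), where $f_{i-1} \leq f_i$, take $j = f_{i-1} - 1$: the index is $(0,\, f_{i-1} - 1)$, and positivity reads $\ell(I_\Delta(i)) \geq f_{i-1}$, which combined with the a priori bound forces $\ell(I_\Delta(i)) = f_{i-1} = \min(f_{i-1}, f_i)$, hence WLP. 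Conversely, WLP gives $\ell(I_\Delta(i)) = f_{i-1}$, making $j = f_{i-1} - 1$ admissible, and positivity follows.

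In case (2), where $f_i \leq f_{i-1}$, take $j = f_i - 1$: the index is $(f_{i-1} - f_i,\, f_i - 1)$, and positivity reads $\ell(I_\Delta(i)) \geq f_i$, which, combined with the a priori bound, is equivalent to $\ell(I_\Delta(i)) = f_i = \min(f_{i-1}, f_i)$. Via \Cref{wlpspread} this matches the WLP. Conversely, WLP gives $\ell(I_\Delta(i)) = f_i$, so $j = f_i - 1 \leq \ell(I_\Delta(i)) - 1$ and positivity follows.

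I do not expect a serious technical obstacle; the argument is essentially a dictionary translation. The one mild subtlety is the asymmetric form of the index in case (2): the first coordinate is $f_{i-1} - f_i$ rather than $0$ because the indices of mixed multiplicities of $(\m_{\Delta, i} \mid I_\Delta(i))$ must sum to $\dim R_{\Delta, i} - 1 = f_{i-1} - 1$, even though the saturating value of $\ell(I_\Delta(i))$ is only $f_i$. Picking the maximal admissible $j$ in \Cref{positivityone} produces precisely the index recorded in the theorem statement.
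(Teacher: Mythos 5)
Your proof is correct and follows essentially the same route as the paper: both reduce the statement to $\ell(I_\Delta(i)) = \min(f_{i-1}, f_i)$ via \Cref{wlpspread} and then translate positivity of the chosen mixed multiplicity into the lower bound $\ell(I_\Delta(i)) \geq \min(f_{i-1}, f_i)$ using the positivity criterion (the paper cites \Cref{sequencepositivity}, you cite its single-ideal specialization \Cref{positivityone}, which is the same statement here). Your explicit mention of the a priori bound $\ell(I_\Delta(i)) \leq \min(f_{i-1}, f_i)$ makes the argument slightly more transparent than the paper's, but it is not a different proof.
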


\begin{proof}

    By \Cref{sequencepositivity}, we know $e_{(a, b)}(\m_{\Delta, i} | I_\Delta(i)) > 0$ if and only if $b\leq \ell(I_\Delta(i)) - 1$, where $a + b = f_{i - 1} - 1$. By \Cref{wlpspread}, $A(\Delta)$ has the WLP in degree $i$ and charactersitic zero if and only if $\ell(I_\Delta(i)) = \min(f_{i - 1}, f_i)$. We have two cases:

    \begin{enumerate}
        \item If $f_{i - 1} \leq f_i$, then $e_{(0, f_{i - 1} - 1)}(\m_{\Delta, i} | I_\Delta(i)) > 0$ if and only if 
        $$
            f_{i - 1} - 1 \leq \ell(I_\Delta(i)) - 1 \leq f_{i - 1} - 1
        $$
        \item If $f_i \leq f_{i - 1}$, then $e_{(f_{i - 1} - f_{i}, f_i - 1)}(\m_{\Delta, i} | I_\Delta(i)) > 0$ if and only if 
        
        $$
            f_{i} - 1 \leq \ell(I_\Delta(i)) - 1 \leq f_{i} - 1.
        $$
    \end{enumerate}
\end{proof}

By applying the results from \cite{bircomb} we extend the characterization of the WLP in characteristic zero and degree $1$ of \cite{daonair} to positive characteristics. In order to extend their result, we need the following lemma.

\begin{lemma}\label{disconnectedgraphwlp}
    Let $G$ be a graph such that $|V(G)| \leq |E(G)|$. If $G$ has a connected component with less edges than vertices, the incidence matrix $M$ of $G$ does not have full rank.
\end{lemma}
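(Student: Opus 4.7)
The plan is to exhibit a nonzero vector in the kernel of $M$ acting on column vectors indexed by vertices. Because $|V(G)| \leq |E(G)|$, the matrix has at least as many rows as columns, so ``full rank'' means column rank equal to $|V(G)|$; it is therefore enough to produce any nontrivial linear relation among the columns.

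First, I would invoke the basic fact that a connected graph on $k$ vertices has at least $k-1$ edges, with equality only for trees. Thus the hypothesized connected component $C$ with strictly fewer edges than vertices must in fact be a tree, and in particular is bipartite. Fix a bipartition $V(C) = A \sqcup B$ such that every edge of $C$ has one endpoint in $A$ and the other in $B$. Then I would define $v \in k^{|V(G)|}$ by $v_j = 1$ for $j \in A$, $v_j = -1$ for $j \in B$, and $v_j = 0$ for $j \notin V(C)$; this vector is obviously nonzero.

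The verification that $Mv = 0$ is routine: for an edge $e \in E(C)$ the corresponding row of $M$ contributes $1 + (-1) = 0$; for an edge $e \notin E(C)$, both endpoints lie outside $V(C)$ (since $C$ is a full connected component), so the row contributes $0$ — note that this covers loops as well, since any loop at a vertex of $V(C)$ would have to belong to $C$, contradicting that trees have no loops. Hence $v \in \ker M$ and $\mathrm{rank}\,M \leq |V(G)| - 1$, so $M$ fails to have full rank.

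I do not expect any real obstacle; the only point of mild care is characteristic $2$, where the construction collapses because $-1 = 1$. In that case $v$ becomes the indicator vector of $V(C)$, and the edge-row computation $1 + 1 = 0$ still gives $Mv = 0$, so the argument goes through in every characteristic. This is exactly the signed bipartition trick underlying \Cref{bip}, restricted to the single tree component produced by the edge-count hypothesis.
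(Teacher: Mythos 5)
Your proof is correct, but it takes a genuinely different route from the paper's. The paper argues by pure rank counting: after permuting rows and columns, $M$ is block diagonal with one block per connected component, the rank of each block is at most $\min(|V(G_i)|,|E(G_i)|)$, and the deficient component forces the sum of these bounds to be strictly less than $|V(G)| = \min(|V(G)|,|E(G)|)$. You instead observe that a connected component with fewer edges than vertices must be a tree, hence bipartite, and exhibit an explicit nonzero kernel vector via the signed bipartition (the same mechanism that underlies \Cref{bip}). Both arguments are valid in every characteristic, and your handling of the two delicate points — loops (excluded from the tree component) and characteristic $2$ (where the indicator vector still works because $1+1=0$) — is correct. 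The trade-off: the paper's argument is shorter and never needs to identify the component's structure, since a block with more columns than rows already has deficient column rank regardless of what the component looks like; your argument does more work but produces an explicit linear dependence among the columns, which is more informative and connects directly to the bipartite-components rank formula used elsewhere in the paper.
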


\begin{proof}
    The incidence matrix of $G$ is a block diagonal matrix where the blocks are the incidence matrices of the connected components $G_1, \dots, G_s$ of $G$. The rank of $M$ is equal to the sum of the ranks of each block. We have the following inequalities:

    $$
        \rk M \leq \sum_{i = 0}^s \min(|V(G_i)|, |E(G_i)|) < \sum_{i = 0}^s |V(G_i)| = |V(G)|. 
    $$
\end{proof}

\begin{theorem}[\textbf{WLP in degree 1 for squarefree monomial ideals}]\label{failwlpdeg1}
    Let $\Delta$ be a simplicial complex such that $f_0 \leq f_1$. Then one of the two following statements holds:

    \begin{enumerate}
        \item $A(\Delta)$ fails the WLP in degree $1$ in every characteristic.
        \item $A(\Delta)$ has the WLP in degree $1$ in every characteristic except characteristic $2$.
    \end{enumerate}
\end{theorem}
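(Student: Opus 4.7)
The plan is to reduce the question to an integer-matrix statement via \Cref{matrixrepresentative}, and then apply the divisibility information provided by \Cref{maincor}. Because $\Delta$ is a simplicial complex, the incidence matrix $M(\Delta, 1)$ is a $0/1$ matrix whose rows sum to $2$; since $f_0 \leq f_1$, it has at least as many rows as columns. By \Cref{matrixrepresentative}, $A(\Delta)$ has the WLP in degree $1$ over a field of characteristic $p$ if and only if $M(\Delta, 1)$ admits at least one nonzero $f_0 \times f_0$ minor modulo $p$.

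Next I would apply \Cref{maincor} to the integer matrix $M(\Delta, 1)$ to conclude that every maximal minor is either $0$ or $\pm 2^k$ for some integer $k \geq 1$. The inequality $k \geq 1$ is where the squarefree hypothesis enters: no row of $M(\Delta, 1)$ has an entry equal to $2$ (such an entry would correspond to a loop, which is impossible for a simplicial complex), so, following the proof of \Cref{maincor}, each connected component of the subgraph selected by an $f_0 \times f_0$ submatrix contributes a factor of $\pm 2$ by \Cref{simisvillarreal} and \Cref{DPB}, and one has to pick at least one connected component.

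From this, the dichotomy is immediate. If every maximal minor of $M(\Delta, 1)$ vanishes over $\mathbb{Q}$, then they vanish as integers, and thus modulo every prime $p$; hence $A(\Delta)$ fails the WLP in degree $1$ in every characteristic, yielding (1). Otherwise, some maximal minor equals $\pm 2^k$ with $k \geq 1$. For any prime $p \neq 2$ this minor is nonzero modulo $p$, so $A(\Delta)$ has the WLP in degree $1$ in characteristic $p$; and since every maximal minor is divisible by $2$, all of them vanish modulo $2$, so the WLP fails in characteristic $2$, yielding (2).

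The main obstacle is the careful verification that the squarefree setting really forces $k \geq 1$ in $\pm 2^k$, rather than possibly a minor equal to $\pm 1$. This relies on the absence of loops (so the "star with a loop" alternative in the proof of \Cref{maincor} is excluded) and on the observation that one can never select $f_0$ edges that all share a single common vertex in a simple graph on $f_0$ vertices, so the gcd reduction in the proof of \Cref{maincor} is not needed in a way that would bypass the factor of $2$.
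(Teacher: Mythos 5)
Your proposal is correct and follows essentially the same route as the paper: represent $\times L\colon A_1\to A_2$ by $M(\Delta,1)$ via \Cref{matrixrepresentative}, invoke \Cref{maincor} on this $2$-stochastic matrix, and read off the dichotomy from the fact that every maximal minor is $0$ or $\pm 2^k$. Your extra verification that $k\geq 1$ (so that all maximal minors vanish modulo $2$) is a point the paper's proof passes over silently, and it is worth noting that even the ``star with a loop'' case in the proof of \Cref{maincor} yields determinant $2$, so the divisibility by $2$ would hold there as well.
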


\begin{proof}
    The matrix $M(\Delta, 1)$ that represents the multiplication map 
    $$
        \times L: A(\Delta)_1 \to A(\Delta)_2
    $$
    has constant row sum $2$. By \Cref{maincor}, the gcd of its maximal minors is either $0$, or a power of $2$. If it is $0$, then $M(\Delta, 1)$ does not have full rank and so $A(\Delta)$ fails the WLP in degree $1$ in every characteristic. If it is a power of $2$, then the multiplication map $\times L: A(\Delta)_1 \to A(\Delta)_2$ has full rank whenever the characteristic of the base field is not $2$. Moreover, the multiplication map does not have full rank when the characteristic of the base field is $2$, so the result holds.
\end{proof}

\section{The WLP in degree 1 of arbitrary monomial ideals}

\Cref{failwlpdeg1} tells us in which characteristics the WLP in degree $1$ of a squarefree Artinian reduction $A$ satisfying $\dim A_1 \leq \dim A_2$ can fail. In this section we prove that the same result holds for arbitrary Artinian algebras $R/I$ where $I$ is a monomial ideal.

\begin{definition}
    Let $I \subset k[x_1, \dots, x_n] = R$ be a monomial ideal such that $A = R/I$ is Artinian. The \textbf{underlying graph} $G_A$ of $A$ has vertex set $\{x_i | x_i \not \in I\}$ and $\{x_i, x_j\}$ (possibly with $i = j$) is an edge of $G_A$ if and only if $x_i x_j \not \in I$. 
\end{definition}

\begin{example}
    Let $R = k[x_1, x_2, x_3, x_4]$ and $I = (x_1 x_3, x_1 x_2 x_4, x_1^3, x _2^2, x_3^5, x_4^2)$. Then the underlying graph of $A = R/I$ is the following:

    \begin{center}
        \tikzset{every picture/.style={line width=0.75pt}} 
        \begin{tikzpicture}[x=0.75pt,y=0.75pt,yscale=-1,xscale=1]
    
            \draw   (149.28,101) -- (220.28,101) -- (220.28,170.27) -- (149.28,170.27) -- cycle ;
            \draw    (149.28,101) -- (220.28,101) ;
            \draw [shift={(220.28,101)}, rotate = 0] [color={rgb, 255:red, 0; green, 0; blue, 0 }  ][fill={rgb, 255:red, 0; green, 0; blue, 0 }  ][line width=0.75]      (0, 0) circle [x radius= 3.35, y radius= 3.35]   ;
            \draw [shift={(149.28,101)}, rotate = 0] [color={rgb, 255:red, 0; green, 0; blue, 0 }  ][fill={rgb, 255:red, 0; green, 0; blue, 0 }  ][line width=0.75]      (0, 0) circle [x radius= 3.35, y radius= 3.35]   ;
            \draw    (149.28,170.27) -- (220.28,170.27) ;
            \draw [shift={(220.28,170.27)}, rotate = 0] [color={rgb, 255:red, 0; green, 0; blue, 0 }  ][fill={rgb, 255:red, 0; green, 0; blue, 0 }  ][line width=0.75]      (0, 0) circle [x radius= 3.35, y radius= 3.35]   ;
            \draw [shift={(149.28,170.27)}, rotate = 0] [color={rgb, 255:red, 0; green, 0; blue, 0 }  ][fill={rgb, 255:red, 0; green, 0; blue, 0 }  ][line width=0.75]      (0, 0) circle [x radius= 3.35, y radius= 3.35]   ;
            \draw   (133.28,61.27) .. controls (163.28,47.27) and (170.28,87.73) .. (149.28,101) .. controls (128.28,114.27) and (103.28,75.27) .. (133.28,61.27) -- cycle ;
            \draw   (247.28,201.27) .. controls (266.28,172.27) and (233.28,156.27) .. (220.28,170.27) .. controls (207.28,184.27) and (228.28,230.27) .. (247.28,201.27) -- cycle ;
            \draw    (149.28,170.27) -- (220.28,101) ;
    
            \draw (130.18,80) node [anchor=north west][inner sep=0.75pt]   [align=left] {$\displaystyle x_{1}$};
            \draw (221.28,79) node [anchor=north west][inner sep=0.75pt]   [align=left] {$\displaystyle x_{2}$};
            \draw (222.28,173.27) node [anchor=north west][inner sep=0.75pt]   [align=left] {$\displaystyle x_{3}$};
            \draw (130.18,171) node [anchor=north west][inner sep=0.75pt]   [align=left] {$\displaystyle x_{4}$};
        \end{tikzpicture}
    \end{center}

    The incidence matrix of $G_A$ is:

    $$
    \begin{blockarray}{ccccc}
        & x_1 & x_2 & x_3 & x_4 \\
      \begin{block}{c(cccc)}
        x_1^2 & 2 & 0 & 0 & 0 \\
        x_3^2 & 0 & 0 & 2 & 0 \\
        x_1 x_2 & 1 & 1 & 0 & 0 \\
        x_1 x_4 & 1 & 0 & 0 & 1 \\
        x_2 x_3 & 0 & 1 & 1 & 0 \\
        x_2 x_4 & 0 & 1 & 0 & 1 \\
        x_3 x_4 & 0 & 0 & 1 & 1 \\
      \end{block}
    \end{blockarray}
    $$

    Moreover, the multiplication map $\times L: A_1 \to A_2$, where $L$ is the sum of the variables, is represented by the following matrix:

    $$
    \begin{blockarray}{ccccc}
        & x_1 & x_2 & x_3 & x_4 \\
      \begin{block}{c(cccc)}
        x_1^2 & 1 & 0 & 0 & 0 \\
        x_3^2 & 0 & 0 & 1 & 0 \\
        x_1 x_2 & 1 & 1 & 0 & 0 \\
        x_1 x_4 & 1 & 0 & 0 & 1 \\
        x_2 x_3 & 0 & 1 & 1 & 0 \\
        x_2 x_4 & 0 & 1 & 0 & 1 \\
        x_3 x_4 & 0 & 0 & 1 & 1 \\
      \end{block}
    \end{blockarray}
    $$
\end{example}

The example above leads us to the following result:

\begin{proposition}\label{multiply2}
    Let $I$ be a monomial ideal of $R = k[x_1, \dots, x_n]$ and $A = R/I$ an Artinian algebra. Assume that the characteristic of $k$ is not $2$. Then the multiplication map $\times L: A_1 \to A_2$ has the same rank as the incidence matrix of $G_A$. 
\end{proposition}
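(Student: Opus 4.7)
The plan is to compare the matrix $[\times L]$ representing $\times L \colon A_1 \to A_2$ in the monomial basis with the incidence matrix $M(G_A)$, and show they coincide outside the rows indexed by squares $x_i^2 \notin I$, where $M(G_A)$ has a $2$ exactly where $[\times L]$ has a $1$. Since $\operatorname{char}(k) \ne 2$, scaling those rows by the unit $2 \in k$ is an invertible row operation, so the two matrices have equal rank.

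First I would fix monomial bases for $A_1$ and $A_2$ (the variables $x_i \notin I$ and the degree-$2$ monomials $m \notin I$, respectively) and read off the entries of $[\times L]$ directly from the expansion $L \cdot x_j = \sum_{i=1}^n x_i x_j$ in $A$. An off-diagonal basis monomial $m = x_i x_j$ with $i \ne j$ receives exactly one contribution from each of $L \cdot x_i$ and $L \cdot x_j$, so its row has a $1$ in columns $x_i$ and $x_j$ and zeros elsewhere; a square $m = x_i^2$ in the basis receives its only contribution from $L \cdot x_i$, yielding a single $1$ in column $x_i$ and zeros elsewhere.

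Next I would match these rows against the definition of $M(G_A)$ given in the \emph{Birational Combinatorics} section: non-loop edges of $G_A$ are by construction the basis monomials $x_i x_j$ of $A_2$ with $i \ne j$, and those rows of $M(G_A)$ agree with $[\times L]$ verbatim; loops correspond to nonzero squares $x_i^2$ and, by the graph-theoretic convention, have a $2$ in the vertex column, which is the sole entrywise mismatch with $[\times L]$. Rescaling each loop row of $[\times L]$ by $2$ (valid because $\operatorname{char}(k) \ne 2$) converts $[\times L]$ into $M(G_A)$ and preserves rank, giving the claim. I expect no substantive obstacle; the only point worth double-checking is that the coefficient of $x_i^2$ in $L \cdot x_i$ is $1$, not $2$, which is a direct consequence of the single occurrence of $x_i \cdot x_i$ in the expansion of $L \cdot x_i$ and is precisely the discrepancy absorbed by the hypothesis $\operatorname{char}(k) \ne 2$.
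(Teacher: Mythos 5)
Your proposal is correct and follows essentially the same route as the paper: both identify the matrix of $\times L$ entrywise with the incidence matrix of $G_A$ except on the rows indexed by nonzero squares $x_i^2$, where the entries differ by the factor $2$, and both conclude by rescaling those rows, which preserves rank since $2$ is a unit in $k$. Your extra care in verifying that the coefficient of $x_i^2$ in $L\cdot x_i$ is $1$ rather than $2$ is exactly the point the paper's proof leaves implicit.
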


\begin{proof}
    The multiplication map $\times L : A_1 \to A_2$ is represented by the matrix:

    $$
    M_{ij} = \begin{cases}
        1 \ \ \ \text{ if the $i$-th monomial of $A_2$ is divisible by the $j$-th monomial of $A_1$} \\
        0 \ \ \ \text{ otherwise}
    \end{cases}
    $$

    In particular, if we multiply the rows associated to monomials that are pure powers of variables by $2$, we get the incidence matrix of $G_A$. Since the characteristic of $k$ is not $2$, multiplying a row of a matrix by a nonzero element does not change the rank, so the result follows.
\end{proof}

We can then generalize the main result of \cite{daonair} to arbitrary monomial ideals in any characteristic different from $2$:

\begin{theorem}[\textbf{Main Theorem}]\label{failwlpdeg1arbitrary}
    Let $R = k[x_1, \dots, x_n]$ and $I$ a monomial ideal. Assume $A = R/I$ is Artinian and $\dim A_1 \leq \dim A_2$. Then either $A$ has the WLP in degree $1$ in every characteristic $\neq 2$, or $A$ fails the WLP in degree $1$ in every characteristic $\neq 2$. 
    
    Moreover, $A$ has the WLP in characteristic zero if and only if every connected component of $G_A$ contains a subgraph that is either:
    
    \begin{enumerate}
        \item A tree with one loop
        \item A graph that has only one cycle of odd length
    \end{enumerate}
\end{theorem}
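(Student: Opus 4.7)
The plan is to run the argument of \Cref{failwlpdeg1} in the more general setting of an arbitrary monomial ideal, with the twist that $G_A$ may now carry loops (arising from variables whose square is nonzero in $A$), using \Cref{multiply2} to translate the multiplication map into the incidence matrix of $G_A$ in any characteristic $\neq 2$.

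First I would fix an arbitrary characteristic $p \neq 2$ and invoke \Cref{multiply2}, so that the rank of $\times L\colon A_1 \to A_2$ over $k$ coincides with the rank of the incidence matrix $M$ of $G_A$ over $k$. The matrix $M$ has $\dim A_2$ rows, $\dim A_1 \leq \dim A_2$ columns, entries in $\{0,1,2\}$, and constant row sum $2$: each row is either the indicator of a pair of distinct variables whose product is nonzero in $A$, or twice the indicator of a single variable whose square is nonzero in $A$. By \Cref{maincor}, every maximal minor of $M$ is either zero or has absolute value a power of $2$; letting $d$ denote the greatest common divisor of these minors in $\Z$, either $d = 0$, in which case $M$ has deficient column rank over every field and $A$ fails the WLP in every characteristic, or $d$ is a positive power of $2$, in which case $M$ has full column rank over every field of characteristic $\neq 2$, so $A$ satisfies the WLP in degree $1$ in every such characteristic. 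This establishes the dichotomy; in particular, the WLP in characteristic $0$ is equivalent to the WLP in every odd characteristic.

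Next I would characterize when $M$ has full column rank in characteristic $0$. After a simultaneous permutation of rows and columns, $M$ is block-diagonal with one block for each connected component of $G_A$, and as in the proof of \Cref{disconnectedgraphwlp} the rank of $M$ is the sum of the ranks of these blocks, so full column rank of $M$ amounts to full column rank of every block. For a loopless component $H$, \Cref{bip} says its incidence matrix has rank $|V(H)|$ precisely when $H$ is non-bipartite, i.e., contains an odd cycle. If $H$ contains a loop at a vertex $v_0$, the corresponding row of $M$ is $2 e_{v_0}$, and its dot product with the bipartition vector $\chi_{V_1} - \chi_{V_2}$ — the unique obstruction to full rank in the loopless bipartite case — is $\pm 2 \neq 0$ in characteristic $0$, so a single loop always restores full column rank. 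Combining these, each block has full column rank iff its component contains a loop (equivalently, a subgraph of type $(1)$) or an odd cycle (equivalently, a subgraph of type $(2)$), which is exactly the stated criterion.

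The main obstacle I expect is this loop case in the rank analysis, since \Cref{bip} is stated only for loopless graphs. I would dispatch it by the short bipartition-vector calculation sketched above, verifying that adding a single loop to any loopless bipartite component increases the rank of its incidence matrix by exactly one in any characteristic $\neq 2$, and that once a component has full column rank no further edges or loops can reduce it. Modulo this verification, the proof is a direct assembly of the tools already developed: \Cref{multiply2}, \Cref{maincor}, \Cref{bip}, and the block-diagonal observation underlying \Cref{disconnectedgraphwlp}.
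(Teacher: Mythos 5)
Your proof is correct, and while the dichotomy half follows the paper's route exactly (\Cref{multiply2} to pass to the incidence matrix of $G_A$ in characteristic $\neq 2$, then \Cref{maincor} to see that the gcd of the maximal minors is $0$ or a power of $2$), your treatment of the combinatorial characterization is genuinely different from the paper's. The paper argues that a maximal minor is nonzero precisely when the corresponding choice of $\dim A_1$ rows yields a subgraph each of whose components defines a birational monomial map, and then imports the classification of such components from \Cref{simisvillarreal}; this forces it to handle separately the case where $G_A$ is a star with a loop at the center (the one connected configuration where the monomials have a nontrivial common factor, so \Cref{simisvillarreal} does not apply directly), and it leaves implicit the step of selecting, inside each component containing a loop or odd cycle, a spanning subgraph with exactly $|V|$ edges of the required type. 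You instead compute the rank of the full incidence matrix blockwise: \Cref{bip} for loopless components, plus the observation that for a connected bipartite loopless component the column kernel is spanned by $\chi_{V_1}-\chi_{V_2}$ and a loop row $2e_{v_0}$ pairs with it to $\pm 2 \neq 0$. This is more elementary, avoids both the subgraph-selection step and the star-with-a-loop special case, and makes transparent why loops and odd cycles are exactly the obstructions; what it gives up is the explicit link to birationality and linear type that the paper is emphasizing. Two small points to tidy up: \Cref{maincor} is stated for strictly more rows than columns, so you should note that its proof applies verbatim when $\dim A_1 = \dim A_2$; and \Cref{bip} is stated only in characteristic zero, which is harmless since you invoke it only for the characteristic-zero characterization, the transfer to odd characteristics being handled entirely by the minor-gcd argument.
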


\begin{proof}
    Assume $G_A$ is not a star with a loop at the center vertex. By \Cref{multiply2}, the rank of the matrix that represents the multiplication map $\times L : A_1 \to A_2$ is equal to the rank of the incidence matrix of $G_A$ in every characteristic $\neq 2$. By \Cref{disconnectedgraphwlp}, we may assume every connected component of $G_A$ has more vertices than edges. Since the combinatorial conditions of the graph are the same as the ones in \Cref{simisvillarreal}, the result follows by \Cref{maincor} and noticing that a maximal minor is nonzero if and only if the associated maximal submatrix is the incidence matrix of a graph where every connected component defines a birational map.
    
    If $G_A$ is a star with a loop, then we claim the matrix that represents the multiplication map $\times L: A_1 \to A_2$ has determinant $1$. To compute this determinant, subtract every column from the column associated to the vertex that is the center of the star. The resulting matrix is the identity matrix, so the result follows. 
\end{proof}

\begin{example}
    Let $A = k[x_1, x_2, x_3, x_4, x_5]/(x_1 x_3, x_1 x_2 x_4, x_1^3, x _2^2, x_3^5, x_4^2)$. Then $A$ has the WLP in degree $1$ in characteristic zero and every odd characteristic by \Cref{failwlpdeg1arbitrary}. In characteristic $2$ we can check $A$ also has the WLP in degree $1$ since at least one of the maximal minors of the matrix that represents the multiplication map $\times L: A_1 \to A_2$ is $1$.
\end{example}

\begin{remark}[\textbf{Another perspective: Hyperplane arrangements and matroids}]
    Throughout this paper, we took the rows of the incidence matrices of a simplicial complex $\Delta$, to be exponents of monomials in the incidence ring of $\Delta$. For an arbitrary monomial ideal $I \subset R$ over a field of characteristic zero, we could also take the entries in each column of an incidence matrix of $I$ (i.e, a matrix that represent the multiplication map $\times L: A_i \to A_{i + 1}$, where $A = R/I$ for some $i$) to be coefficients of a linear form in the incidence ring of $I$. For example taking $\Delta$ to be the simplicial complex from \Cref{example1} we have:    

    $$
    \begin{blockarray}{ccccc}
        & a & b & c & d \\
      \begin{block}{c(cccc)}
        ab & 1 & 1 & 0 & 0 \\
        ac & 1 & 0 & 1 & 0 \\
        ad & 1 & 0 & 0 & 1 \\
        bc & 0 & 1 & 1 & 0 \\
        bd & 0 & 1 & 0 & 1 \\
        cd & 0 & 0 & 1 & 1 \\
      \end{block}
    \end{blockarray}
    $$

    defines the product of linear forms:

    $$
    h = (\underbrace{t_{ab} + t_{ac} + t_{ad}}_a)(\underbrace{t_{ab} + t_{bc} + t_{bd}}_b)(\underbrace{t_{ac} + t_{bc} + t_{cd}}_c)(\underbrace{t_{ad} + t_{bd} + t_{cd}}_d) \in R_I.
    $$

    Combining the results on positivity of mixed multiplicities in \cite{trungpositivity} and the results on mixed multiplicities of jacobian ideals of products of linear forms in \cite{huh}, one concludes that the rank of these incidence matrices is equal to the analytic spread of jacobian ideals, for example:

    $$
        \rk (M(\Delta, 1)) = 4 = \ell(J_h)
    $$

    where $J_h \subset R_I$ is the jacobian ideal of $h$. 
    
    From this perspective, one natural question to ask is which matroids can be represented by incidence matrices. As an example, let $\Delta$ be the simplex on $5$-vertices, so that $A(\Delta) = k[a,b,c,d,e]/(a^2, b^2, c^2, d^2, e^2)$. Then the matrix
    
    $$
    M(\Delta, 3) = 
    \begin{blockarray}{ccccccccccc}
        & abc & abd & abe & acd & ace & ade & bcd & bce & bde & cde \\
      \begin{block}{c(cccccccccc)}
        abcd & 1 & 1 & 0 & 1 & 0 & 0 & 1 & 0 & 0 & 0 \\
        abce & 1 & 0 & 1 & 0 & 1 & 0 & 0 & 1 & 0 & 0 \\
        abde & 0 & 1 & 1 & 0 & 0 & 1 & 0 & 0 & 1 & 0 \\
        acde & 0 & 0 & 0 & 1 & 1 & 1 & 0 & 0 & 0 & 1 \\
        bcde & 0 & 0 & 0 & 0 & 0 & 0 & 1 & 1 & 1 & 1 \\
      \end{block}
    \end{blockarray}
    $$
    which corresponds to the triangle-tetrahedra incidence in $\Delta$, is a representation of the matroid $R_{10}$.
\end{remark}

\section{Closing remarks}

In this paper, we introduced incidence ideals of a simplicial complex $\Delta$ in order to study Lefschetz properties of the algebra $A(\Delta)$. In \Cref{basicpropsincidence}, we showed some necessary conditions for a squarefree monomial ideal to be the incidence ideal of some simplicial complex. \Cref{examplefano} gives us an example that these conditions are not sufficient, a natural question is: 

\begin{question}
    What are necessary and sufficient conditions for a squarefree monomial ideal to be the incidence ideal of some simplicial complex?
\end{question}

For a birational map $\varphi$ satisfying the hypothesis of \Cref{simisvillarreal}, the ideal generated by each entry of $\varphi$ is of linear type. In \cite{edgelineartype}, Villarreal characterized edge ideals of linear type as edge ideals of graphs that are trees or have one unique cycle of odd length. In \cite{alisara}, the authors generalized this result to all squarefree monomial ideals by introducing the concept of an even walk in a simplicial complex. In view of \Cref{simisvillarreal}, \Cref{failurewlp} and the fact that the last mixed multiplicity of an ideal that defines a birational map is $1$, we ask the following question:

\begin{question}
    Can we find classes of simplicial complexes where the algebra $A(\Delta)$ has the WLP in degree $i$ in every characteristic that does not divide $i + 1$ by finding incidence ideals of linear type? 
\end{question}

\bigskip

\textbf{Acknowledgements:} I would like to thank my supervisor Sara Faridi for very useful discussions which improved the presentation of the paper. I also thank Jonathan Montaño for answering questions on mixed multiplicities.














\end{document}